\newtheorem{theorem}{Theorem}[section]
\newtheorem{case}{Case}[section]
\newtheorem{claim}{Claim}[section]
\newtheorem{definition}{Definition}[section]
\newtheorem{lemma}{Lemma}[section]
\newtheorem{notation}{Notation}[section]
\newtheorem{proposition}{Proposition}[section]
\newtheorem{remark}{Remark}[section]
\begin{document}

\begin{singlespace}

\begin{center}
\bigskip

{\Large Orbital Fuzzy Iterated Function Systems}

\bigskip

Alexandru MIHAIL, Irina SAVU

\bigskip
\end{center}

\begin{quotation}
\textbf{Abstract}: In this paper we introduce the concept of orbital fuzzy
iterated function system and prove that the fuzzy operator associated to
such a system is weakly Picard. An example is provided.

\textbf{Keywords}: orbital fuzzy iterated function system, fuzzy operator,
attractor.

\textbf{Mathematics Subject Classification (2010): \ }28A80, 37C70, 54H25.
\end{quotation}

\bigskip

\section{Introduction}

\bigskip

The concept of fractal was introduced by B. Mandelbrot (see \cite{Mandelbrot}%
), as a set whose Hausdorff dimension strictly exceeds its topological
dimension. He also indicated that self-similarity (a property that is
present in the case of the classical fractal sets as Cantor's ternary set,
Sierpinski triangle etc.) is very important in the study of such sets. J.
Hutchinson (see \cite{Hutchinson}) was the one to set up a theory of self
similar sets in a general framework, by introducing iterated function
systems (for short IFSs), even though some other mathematicians, as P. Moran
(see \cite{Moran}) and R. Williams (see \cite{Williams}), worked in this
direction before him.

Iterated function systems have applications in several mathematical fields.
Among them we mention: statistical mechanics (see \cite{Stenflo}), Monte
Carlo algorithms (see \cite{Roberts}), random matrices (see \cite{Kaij}),
continued fractions (see \cite{Iosifescu2}), wavelets (see \cite{Jorg}), etc.

Three excellent surveys of IFSs are \cite{Diaconis}, \cite{Iosifescu} and
\cite{Lesn}.

Many mathematicians provided generalizations of Hutchinson's results by
allowing weaker forms of contractions: average contractions (see \cite%
{Barnsley}), $\varphi $-contractions (see \cite{Georgescu}, \cite{Georgescu2}%
, \cite{Ioana}, \cite{Jach} and \cite{Mate}), Hardy-Rogers contractions (see
\cite{Georgescu3}),\ \'{C}iri\'{c} contractions (see \cite{Petr}), Reich
contractions (see \cite{M1}), Kannan and Chatterjea contractions (see \cite%
{VanD}), convex contractions (see \cite{M2}), etc.

On the one hand, in the view of the topic treated in this paper, a special
place in the above mentioned list is occupied by the iterated function
systems consisting of continuous functions satisfying Banach's orbital
condition (see \cite{M3}, \cite{Savu} and \cite{MS}). We emphasize that the
fractal operator associated to such a system is weakly Picard, in contrast
to the previous mentioned generalizations.

On the other hand, another kind of generalization of Hutchinson's concept of
iterated function system, namely the fuzzy version of it, was initiated by
Cabrelli, Forte, Molter, and Vrscay (see \cite{Cabrelli}) by considering the
attractor of an IFS as a certain fuzzy set. For some other results along
this line of research see \cite{AR}, \cite{AR2}, \cite{Cunha}, \cite{Forte}
and \cite{Str}).

In this paper, we combine the two afore-mentioned research directions by
introducing the concept of orbital fuzzy iterated function system (see
Definition \ref{dfuzzy}) and proving that its fuzzy Hutchinson-Barnsley
operator is weakly Picard (see Theorem \ref{thm}). Finally, an illustrative
example is presented.

\bigskip

\section{Preliminaries}

\bigskip

Given a set $X$, a function $f:X\rightarrow X$ and $n\in
\mathbb{N}
,$ by $f^{n}$ we mean $f\circ f\circ ...\circ f$ \ by $n$ times.

Given a metric space $\left( X,d\right) ,$ by:

- a weakly Picard operator we mean a function $f:X\rightarrow X$ having the
property that, for every $x\in X,$ the sequence $\left( f^{n}\left( x\right)
\right) _{n\in
\mathbb{N}
}$ is convergent to a fixed point of $f$;

- $diam\left( A\right) $ we mean the diameter of the subset $A$ of $X$,
defined by%
\begin{equation*}
diam\left( A\right) =\sup_{x,y\in A}d\left( x,y\right) ;
\end{equation*}

- $P_{b}\left( X\right) $ we mean the set of non-empty bounded subsets of $%
X; $

- $P_{cl,b}\left( X\right) $ we mean the set of non-empty closed and bounded
subsets of $X;$

- $P_{cp}\left( X\right) $ we mean the set of non-empty compact subsets of $%
X;$

- the Hausdorff-Pompeiu semimetric we mean $h:P_{b}\left( X\right) \times
P_{b}\left( X\right) \rightarrow \lbrack 0,+\infty )$ given by%
\begin{equation*}
h\left( A,B\right) =\max \left\{ d\left( A,B\right) ,d\left( B,A\right)
\right\}
\end{equation*}%
for all $A,B\in P_{b}\left( X\right) ,$ where $d\left( A,B\right)
=\sup_{x\in A}\inf_{y\in B}d\left( x,y\right) $ and $d\left( B,A\right)
=\sup_{x\in B}\inf_{x\in A}d\left( x,y\right) $. The restriction of $h$ to $%
P_{cl,b}\left( X\right) $ is called the Hausdorff-Pompeiu metric and it is
also denoted by $h$.

\begin{lemma}
\label{conv}Let $\left( X,d\right) $ be a metric space. Let us consider $%
f:X\rightarrow X$ a continuous function and $\left( f_{n}\right) _{n}$ a
sequence with $f_{n}:X\rightarrow X$ continuous for all $n\in
\mathbb{N}
$, such that $\left( f_{n}\right) _{n}$ is uniform convergent to $f$. Let $%
\left( x_{n}\right) _{n}\subset X$ be a sequence that is convergent to $x\in
X$. Then,%
\begin{equation*}
\lim_{n\rightarrow \infty }f_{n}\left( x_{n}\right) =f\left( x\right) \text{.%
}
\end{equation*}
\end{lemma}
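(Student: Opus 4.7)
The plan is to estimate $d(f_n(x_n), f(x))$ by inserting the intermediate point $f(x_n)$ and applying the triangle inequality, obtaining
\begin{equation*}
d(f_n(x_n), f(x)) \le d(f_n(x_n), f(x_n)) + d(f(x_n), f(x)).
\end{equation*}
This splits the problem into controlling two very different phenomena: the discrepancy between $f_n$ and $f$ evaluated at a common point, and the continuity of the limit $f$.

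First I would bound the first summand uniformly by the quantity $\sup_{y\in X} d(f_n(y), f(y))$, which tends to $0$ as $n\to\infty$ by the hypothesis that $(f_n)_n$ converges to $f$ uniformly on $X$. In particular, the first term tends to $0$ regardless of the sequence $(x_n)_n$ chosen.

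For the second summand I would simply invoke the continuity of $f$ at $x$: since $x_n \to x$, we have $f(x_n) \to f(x)$, so $d(f(x_n), f(x)) \to 0$. Combining the two estimates, the right-hand side tends to $0$, hence $f_n(x_n) \to f(x)$, as required.

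There is essentially no main obstacle; the only conceptual point worth flagging is that the continuity of the individual $f_n$ is not actually used in the argument, only that of the limit $f$, and the key insight is simply the choice of $f(x_n)$ as intermediate point, which decouples the uniform convergence estimate from the continuity estimate.
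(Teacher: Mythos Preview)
Your argument is correct and is the standard one: the triangle inequality with intermediate point $f(x_n)$ cleanly separates the uniform-convergence term from the continuity term, and your observation that the continuity of the individual $f_n$ is never used is also accurate. The paper states this lemma without proof in its preliminaries section, so there is nothing to compare against; your proof is exactly what one would expect here.
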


\begin{lemma}
\label{lema_maxmax}Let $I$ be a finite set, $J$ an infinite set and $\left(
a_{ij}\right) _{i\in I\text{,}j\in J}$ a family of elements from $%
\mathbb{R}
$, such that $\sup_{j\in J}a_{ij}=\max_{j\in J}a_{ij}$. Then,%
\begin{equation*}
\max_{i\in I}\max_{j\in J}a_{ij}=\max_{j\in J}\max_{i\in I}a_{ij}\text{.}
\end{equation*}
\end{lemma}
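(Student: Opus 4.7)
The plan is to exploit the finiteness of $I$ to reduce the two-variable maximum to a finite collection of suprema in $j$, each of which is attained by hypothesis, and then to argue that the resulting iterated expression coincides with the reversed order of maxima.

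First I would use the hypothesis that $\sup_{j\in J}a_{ij}=\max_{j\in J}a_{ij}$ to select, for each $i\in I$, an index $j_i\in J$ with $a_{ij_i}=\max_{j\in J}a_{ij}$. Since $I$ is finite, I can then choose $i^{\ast}\in I$ such that $a_{i^{\ast}j_{i^{\ast}}}=\max_{i\in I}a_{ij_i}=\max_{i\in I}\max_{j\in J}a_{ij}$, which establishes that the left-hand side is attained and equals $a_{i^{\ast}j_{i^{\ast}}}$. I will denote this common value by $M$.

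Next I would analyze the right-hand side. For every $j\in J$, the inner maximum $b_j:=\max_{i\in I}a_{ij}$ exists because $I$ is finite, and clearly $b_j\le M$, since $a_{ij}\le \max_{j\in J}a_{ij}\le M$ for each $i\in I$. On the other hand, evaluating at $j=j_{i^{\ast}}$, I get $b_{j_{i^{\ast}}}\ge a_{i^{\ast}j_{i^{\ast}}}=M$. Therefore $\sup_{j\in J}b_j=M$ and this supremum is attained at $j_{i^{\ast}}$, so it is in fact a maximum. This yields $\max_{j\in J}\max_{i\in I}a_{ij}=M=\max_{i\in I}\max_{j\in J}a_{ij}$, as required.

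The only delicate point is the well-definedness of the right-hand side: without the standing assumption that each $\sup_{j\in J}a_{ij}$ is attained, the equality $\max_{j\in J}\max_{i\in I}a_{ij}=\sup_{j\in J}\max_{i\in I}a_{ij}$ could fail, so I expect the main (and essentially only) obstacle is to show that the outer supremum on the right is attained; the argument above handles this by exhibiting $j_{i^{\ast}}$ as an explicit maximizer. The remainder is an entirely routine verification that switching the order of a finite max with a supremum preserves the value.
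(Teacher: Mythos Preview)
Your argument is correct. The paper itself states this lemma without proof, treating it as an elementary fact, so there is no approach to compare against; your explicit construction of the maximizer $j_{i^{\ast}}$ and the verification that $\sup_{j\in J}\max_{i\in I}a_{ij}$ is attained there is exactly what is needed to justify the statement.
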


\bigskip

\textbf{Results regarding the Hausdorff-Pompeiu semimetric}

\begin{proposition}
\bigskip \lbrack see \cite{Sec2}] \ For a metric space $\left( X,d\right) $,
we have:%
\begin{equation}
h\left( \cup _{i\in I}A_{i},\cup _{i\in I}B_{i}\right) \leq \sup_{i\in
I}h\left( A_{i},B_{i}\right)  \label{sup}
\end{equation}%
for every $\left( A_{i}\right) _{i\in I\text{ }}$and $\left( B_{i}\right)
_{i\in I}$ families of elements from $P_{b}\left( X\right) .$
\end{proposition}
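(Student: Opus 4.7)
The plan is to unpack the definition of the Hausdorff-Pompeiu semimetric and exploit the symmetry of the inequality to reduce the problem to controlling only one of the two asymmetric distances. Since
\[
h(\cup_{i\in I} A_i,\cup_{i\in I} B_i)=\max\bigl\{d(\cup_{i\in I} A_i,\cup_{i\in I} B_i),\ d(\cup_{i\in I} B_i,\cup_{i\in I} A_i)\bigr\},
\]
it suffices to prove the bound
\[
d(\cup_{i\in I} A_i,\cup_{i\in I} B_i)\le \sup_{i\in I} h(A_i,B_i),
\]
and then apply the same argument with the roles of the $A_i$'s and $B_i$'s swapped.

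To prove this asymmetric inequality, I would fix an arbitrary $x\in\cup_{i\in I} A_i$. Then $x\in A_{i_0}$ for some index $i_0\in I$. Now estimate the inner infimum:
\[
\inf_{y\in \cup_{i\in I} B_i} d(x,y)\ \le\ \inf_{y\in B_{i_0}} d(x,y),
\]
because enlarging the set over which one takes an infimum can only decrease it. The right-hand side is bounded above by $\sup_{x'\in A_{i_0}}\inf_{y\in B_{i_0}} d(x',y)=d(A_{i_0},B_{i_0})\le h(A_{i_0},B_{i_0})\le \sup_{i\in I} h(A_i,B_i)$.

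Taking the supremum over $x\in\cup_{i\in I} A_i$ yields the desired bound on $d(\cup_{i\in I} A_i,\cup_{i\in I} B_i)$, and the symmetric argument gives the bound on $d(\cup_{i\in I} B_i,\cup_{i\in I} A_i)$. Combining these through the max that defines $h$ concludes the proof. No step here is a genuine obstacle: the entire argument is a routine unpacking of the definitions, the only subtlety being to keep track of the quantifier order and to observe that the index $i_0$ depends on the chosen point $x$, which is exactly why one can pass from $\cup_{i\in I} B_i$ down to the single set $B_{i_0}$ in the inner infimum.
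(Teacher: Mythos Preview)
Your argument is correct and is exactly the standard proof of this inequality. Note that the paper does not actually supply its own proof here: the proposition is quoted from \cite{Sec2} without argument, so there is nothing to compare against beyond confirming that your unpacking of the definitions is sound, which it is.
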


\begin{proposition}
\label{diam}\bigskip \lbrack see \cite{Sec2}] For a metric space $\left(
X,d\right) $, we have%
\begin{equation*}
h\left( A,B\right) \leq diam\left( A\cup B\right)
\end{equation*}%
for every $A,B\in P_{b}\left( X\right) $.
\end{proposition}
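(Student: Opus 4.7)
The plan is to unfold both sides directly from their definitions and observe that every pair $(x,y)$ with $x\in A$ and $y\in B$ contributes a term bounded by $\mathrm{diam}(A\cup B)$. Since $h(A,B)=\max\{d(A,B),d(B,A)\}$, it suffices to bound each of the two asymmetric distances $d(A,B)$ and $d(B,A)$ by $\mathrm{diam}(A\cup B)$, and the two estimates are symmetric in $A$ and $B$, so only one argument is needed.

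First I would fix an arbitrary $x\in A$. Since $B$ is non-empty, pick any $y_{0}\in B$; both $x$ and $y_{0}$ lie in $A\cup B$, so by the definition of the diameter we get $d(x,y_{0})\leq \mathrm{diam}(A\cup B)$. Consequently
\[
\inf_{y\in B} d(x,y)\leq d(x,y_{0})\leq \mathrm{diam}(A\cup B).
\]
Taking the supremum over $x\in A$ yields $d(A,B)\leq \mathrm{diam}(A\cup B)$. Interchanging the roles of $A$ and $B$ in this argument gives the symmetric bound $d(B,A)\leq \mathrm{diam}(A\cup B)$, and the desired inequality follows by taking the maximum.

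There is essentially no obstacle here: the only thing one has to be slightly careful about is that $\mathrm{diam}(A\cup B)$ is well-defined and finite because $A,B\in P_{b}(X)$ forces $A\cup B$ to be bounded (any two points of $A\cup B$ are within $\mathrm{diam}(A)+d(a_{0},b_{0})+\mathrm{diam}(B)$ of each other for fixed $a_{0}\in A$, $b_{0}\in B$), and that $B$ is non-empty so that the infimum is taken over a non-empty set. Both are guaranteed by the hypothesis $A,B\in P_{b}(X)$.
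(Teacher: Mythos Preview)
Your proof is correct and is the standard direct argument. Note that the paper does not actually prove this proposition: it merely cites it from \cite{Sec2}, so there is no ``paper's own proof'' to compare against. Your argument is exactly what one would expect and needs no modification.
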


\begin{proposition}
\label{h_cont}Let $\left( X,d\right) $ be a metric space, $\left(
K_{n}\right) _{n\in
\mathbb{N}
}\subseteq P_{cp}\left( X\right) $ and $K\in P_{cp}\left( X\right) $ such
that $\lim_{n\rightarrow \infty }h\left( K_{n},K\right) =0.$ Let $%
f:X\rightarrow X$ be a continuous function. Then, $\lim_{n\rightarrow \infty
}h\left( f\left( K_{n}\right) ,f\left( K\right) \right) =0$.
\end{proposition}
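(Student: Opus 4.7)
The plan is to reduce the problem to uniform continuity of $f$ on a compact set. More precisely, I would first prove that the union
\[
C = K \cup \bigcup_{n\in\mathbb{N}} K_{n}
\]
is a compact subset of $X$. Once this is established, $f$ restricted to $C$ is uniformly continuous, and the Hausdorff convergence $h(K_n,K)\to 0$ translates directly into the Hausdorff convergence of the images.

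For the compactness of $C$, I would use the sequential characterization. Given a sequence $(x_k)\subseteq C$, either infinitely many terms lie in a single $K_n$ (or in $K$), in which case compactness of that set yields a convergent subsequence, or, after passing to a subsequence, one may assume $x_k\in K_{n_k}$ with $n_k$ strictly increasing. Since $K$ is compact, for each $k$ the infimum defining $d(x_k,K)$ is attained at some $y_k\in K$, and
\[
d(x_k,y_k)=d(x_k,K)\leq h(K_{n_k},K)\longrightarrow 0.
\]
Compactness of $K$ gives a subsequence $y_{k_j}\to y\in K\subseteq C$, and then $x_{k_j}\to y$ as well. Hence $C$ is sequentially compact, i.e.\ compact.

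With compactness in hand, fix $\varepsilon>0$ and choose $\delta>0$ so that $d(x,x')<\delta$ implies $d(f(x),f(x'))<\varepsilon$ for all $x,x'\in C$. Then select $N$ such that $h(K_n,K)<\delta$ for $n\geq N$. For any $x\in K_n$, compactness of $K$ again gives $y\in K$ with $d(x,y)\leq h(K_n,K)<\delta$, whence $d(f(x),f(y))<\varepsilon$ and consequently $\inf_{z\in f(K)}d(f(x),z)<\varepsilon$. Taking the supremum over $x\in K_n$ yields $d(f(K_n),f(K))\leq\varepsilon$, and the symmetric estimate $d(f(K),f(K_n))\leq\varepsilon$ is obtained in the same way by swapping the roles of $K_n$ and $K$. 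Therefore $h(f(K_n),f(K))\leq\varepsilon$ for $n\geq N$.

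The only delicate point is the compactness of $C$: one cannot simply invoke uniform continuity of $f$ on a neighborhood of $K$, since in a general metric space a bounded neighborhood of a compact set need not be totally bounded. The sequential argument above is what makes everything go through without any completeness or local compactness assumption on $X$.
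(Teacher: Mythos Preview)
Your argument is correct. Note, however, that the paper does not actually supply a proof of this proposition: it is listed among the preliminary facts on the Hausdorff--Pompeiu semimetric without justification or citation, so there is nothing to compare your approach against directly.

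That said, your proof dovetails nicely with the surrounding material. The compactness of $C=K\cup\bigcup_{n}K_{n}$ is precisely Proposition~\ref{reun_comp} in the paper, which is quoted there (with a citation) under the additional hypothesis that $(X,d)$ is complete. Your sequential argument establishes this compactness without completeness, which is exactly what Proposition~\ref{h_cont} requires since it is stated for an arbitrary metric space. The passage from uniform continuity on $C$ to $h(f(K_n),f(K))\to 0$ is then routine and carried out cleanly. Your closing remark about why one cannot simply work on a neighborhood of $K$ is also apt.
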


\begin{proposition}
\label{lim_comp} Let $\left( X,d\right) $ be a complete metric space, $%
\left( K_{n}\right) _{n\geq 0}\subset P_{cp}\left( X\right) $ and $K\ $a
closed subset of $X$ such that $\lim_{n\rightarrow \infty }h\left(
K_{n},K\right) =0$. Then, $K\in P_{cp}\left( X\right) $.
\end{proposition}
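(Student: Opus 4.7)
My plan is to reduce the claim to showing that $K$ is totally bounded, since $K$ is already assumed to be closed in the complete metric space $(X,d)$ and therefore complete as a subspace; once we know $K$ is also totally bounded and non-empty, it will follow that $K$ is compact.

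First I would dispose of non-emptiness. For $n$ sufficiently large we have $h(K_{n},K)<1$, in particular $h(K_{n},K)<+\infty$. Since each $K_{n}$ is non-empty (as $K_{n}\in P_{cp}(X)$), the quantity $d(K_{n},K)=\sup_{x\in K_{n}}\inf_{y\in K}d(x,y)$ cannot be finite if $K=\emptyset$, because then $\inf_{y\in K}d(x,y)=+\infty$ for every $x\in K_{n}$. Hence $K\neq\emptyset$.

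Next I would prove total boundedness by a standard three-$\varepsilon$ approximation argument. Given $\varepsilon>0$, pick $n$ with $h(K_{n},K)<\varepsilon/3$. Using that $K_{n}$ is compact, and therefore totally bounded, choose a finite $\varepsilon/3$-net $\{x_{1},\ldots,x_{m}\}\subseteq K_{n}$. For each $x_{i}$, since $d(x_{i},K)\leq h(K_{n},K)<\varepsilon/3$, select $y_{i}\in K$ with $d(x_{i},y_{i})<\varepsilon/3$. For an arbitrary $y\in K$, we have $d(y,K_{n})\leq h(K_{n},K)<\varepsilon/3$, so there exists $x\in K_{n}$ with $d(y,x)<\varepsilon/3$, and then some $x_{i}$ with $d(x,x_{i})<\varepsilon/3$. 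The triangle inequality gives $d(y,y_{i})<\varepsilon$, so $\{y_{1},\ldots,y_{m}\}$ is a finite $\varepsilon$-net for $K$. Thus $K$ is totally bounded.

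Finally, because $K$ is a closed subset of the complete metric space $X$, it is itself complete as a metric subspace. A complete, totally bounded, non-empty metric space is compact, hence $K\in P_{cp}(X)$, as required. I do not expect a serious obstacle here; the only mildly delicate point is remembering to pick the net $\{y_{1},\ldots,y_{m}\}$ inside $K$ (rather than leaving it in $K_{n}$), which is needed if one uses the definition of total boundedness that requires the net to lie in the set itself, but this is handled by the extra $\varepsilon/3$ slack in the choice of $y_{i}$.
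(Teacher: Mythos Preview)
Your proof is correct. The paper itself does not supply a proof of this proposition; it is listed among the preliminary facts in Section~2 without justification or citation, so there is no argument in the paper to compare against. Your approach via total boundedness is the standard one and works cleanly: the only minor remark is that in the paper's setup $h$ is defined on $P_{b}(X)\times P_{b}(X)$, so strictly speaking $h(K_{n},K)$ being finite already presupposes $K\in P_{b}(X)$ and in particular $K\neq\emptyset$; your non-emptiness paragraph is therefore either redundant or a harmless unpacking of this implicit assumption.
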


\begin{proposition}
\lbrack see Proposition 2.8 from \cite{M2}] \label{reun_comp}Let $\left(
X,d\right) $ be a complete metric space, $\left( K_{n}\right) _{n\geq
0}\subset P_{cp}\left( X\right) $ and $K\in P_{cp}\left( X\right) $ such
that $\lim_{n\rightarrow \infty }h\left( K_{n},K\right) =0$. Then, $K\cup
\left( \cup _{n=0}^{\infty }K_{n}\right) \in P_{cp}\left( X\right) $.
\end{proposition}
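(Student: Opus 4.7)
The plan is to establish compactness of $S := K \cup \bigcup_{n=0}^{\infty} K_n$ by proving sequential compactness, since we are in a metric space. I would fix an arbitrary sequence $(x_k)_{k}\subset S$ and construct a subsequence converging to a point of $S$, splitting the argument into two cases according to how $(x_k)$ distributes among the sets $K_n$ and $K$.

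In the first case, suppose there exists $N\in\mathbb{N}$ such that infinitely many terms $x_k$ belong to the finite union $F_N := K\cup K_0\cup K_1\cup\cdots\cup K_N$. Then $F_N$ is a finite union of compact sets, hence compact, so the subsequence of $(x_k)$ lying in $F_N$ has a convergent subsequence whose limit lies in $F_N\subseteq S$. This case is essentially immediate.

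In the second (main) case, no such $N$ exists. Then I would pass to a subsequence (again denoted $(x_k)$) for which $x_k\in K_{n_k}$ with $n_k\to\infty$. Using $x_k\in K_{n_k}$ together with the definition of the one-sided distance, I obtain
\begin{equation*}
\inf_{y\in K}d(x_k,y)\leq d(K_{n_k},K)\leq h(K_{n_k},K)\xrightarrow[k\to\infty]{}0.
\end{equation*}
Because $K$ is compact, the function $y\mapsto d(x_k,y)$ attains its infimum on $K$, so there exists $y_k\in K$ with $d(x_k,y_k)\to 0$. Applying compactness of $K$ once more, I extract a subsequence $y_{k_j}\to y\in K$, and then $x_{k_j}\to y\in K\subseteq S$, which finishes the argument.

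The main obstacle, I expect, is the second case, and specifically the step of producing the approximating points $y_k\in K$. The crucial ingredients are that the one-sided Hausdorff distance dominates $d(x_k,K)$ for $x_k\in K_{n_k}$, and that compactness of $K$ lets us both realize this distance by a true point $y_k$ and extract a convergent subsequence of the $y_k$. Once these observations are in place, the rest is bookkeeping, and the hypothesis that $(X,d)$ be complete is in fact not needed here, compactness of $K$ being sufficient.
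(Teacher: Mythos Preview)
Your argument is correct: the two-case split via sequential compactness is the standard route, and each step is justified as you describe. Your observation that completeness of $(X,d)$ is not actually used is also correct; compactness of $K$ alone carries the second case.

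Note, however, that the paper does not supply its own proof of this proposition: it is stated with a citation to Proposition~2.8 of \cite{M2} and no argument is given in the text. So there is no in-paper proof to compare against. For what it is worth, the proof in the cited reference proceeds along essentially the same lines as yours (sequential compactness, distinguishing the case where the indices stay bounded from the case where they escape to infinity and then approximating by points of $K$).
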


\begin{proposition}
\lbrack see \cite{Sec2}] \ If the metric space $\left( X,d\right) $ is
complete, then the metric space $\left( P_{cp}\left( X\right) ,h\right) $ is
complete.
\end{proposition}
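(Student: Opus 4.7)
The plan is to start with a Cauchy sequence $(K_n)_{n \geq 1}$ in $(P_{cp}(X), h)$ and produce a candidate limit by setting
\[
K = \bigcap_{N \geq 1} \overline{\bigcup_{n \geq N} K_n},
\]
which coincides with the set of all $x \in X$ arising as $x = \lim_k x_{n_k}$ for some strictly increasing $(n_k)$ and some $x_{n_k} \in K_{n_k}$. The task then splits into three pieces: (i) $K \neq \emptyset$, (ii) $K \in P_{cp}(X)$, and (iii) $h(K_n, K) \to 0$.

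For (i), I would exploit the Cauchy property to extract a subsequence $n_1 < n_2 < \cdots$ with $h(K_{n_k}, K_{n_{k+1}}) < 2^{-k}$. Starting from any $x_{n_1} \in K_{n_1}$, I inductively choose $x_{n_{k+1}} \in K_{n_{k+1}}$ with $d(x_{n_k}, x_{n_{k+1}}) < 2^{-k}$; this is possible because $d(x_{n_k}, K_{n_{k+1}}) \leq h(K_{n_k}, K_{n_{k+1}})$ and $K_{n_{k+1}}$ is compact, so the infimum is attained. The resulting sequence is Cauchy in $X$, so it converges by completeness, and its limit lies in $K$ by the characterization above.

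For (ii), closedness of $K$ is immediate from the intersection-of-closures formula. To show total boundedness, given $\varepsilon > 0$, pick $N$ with $h(K_n, K_N) < \varepsilon/2$ for all $n \geq N$. Cover the compact set $K_N$ by finitely many balls of radius $\varepsilon/2$; enlarging them to radius $\varepsilon$ gives a finite cover of every $K_n$ with $n \geq N$, hence of $\bigcup_{n \geq N} K_n$, and on passing to the closure, of $K$ itself. Combined with closedness of $K$ in the complete space $X$, this yields $K \in P_{cp}(X)$.

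The final and most delicate step (iii) requires both $d(K, K_n) \to 0$ and $d(K_n, K) \to 0$. The first direction follows from the definition of $K$: any $y \in K$ is approximable by elements of $K_m$ for arbitrarily large $m$, which together with the Cauchy bound $h(K_m, K_n) < \varepsilon$ controls $d(y, K_n)$. For the reverse direction, given $x \in K_n$ with $n \geq N$, I would apply the same inductive $2^{-k}$-construction as in (i), but now starting the chain at $x$ and using a subsequence chosen so that $h(K_{m_k}, K_{m_{k+1}}) < \varepsilon \cdot 2^{-k}$, producing $y \in K$ with $d(x, y) < 2\varepsilon$. The main obstacle is precisely this bookkeeping: one must match the geometric $2^{-k}$-approximations against a uniform $\varepsilon$-budget so that the limit point in $K$ is genuinely close to the starting point $x$, and this choice must be made uniformly in $x \in K_n$. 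Once this is done, (i)--(iii) together show that $(K_n)$ converges to $K$ in $(P_{cp}(X), h)$, establishing completeness.
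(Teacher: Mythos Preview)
The paper does not actually prove this proposition: it is stated with the citation ``[see \cite{Sec2}]'' and no argument is given. There is therefore nothing in the paper to compare your proposal against. Your outline is the standard proof of completeness of $(P_{cp}(X),h)$ --- defining the candidate limit as the set of subsequential limits (equivalently $\bigcap_N \overline{\bigcup_{n\geq N} K_n}$), then verifying non-emptiness via a fast Cauchy subsequence, compactness via total boundedness inherited from some $K_N$, and finally two-sided convergence in $h$ --- and the plan is correct as written.
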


\bigskip

\textbf{Orbital iterated function systems}

\bigskip

\begin{definition}
Let $\left( X,d\right) $ be a complete metric space and $\left( f_{i}\right)
_{i\in I}$ a finite family of continuous functions, where $%
f_{i}:X\rightarrow X$ for all $i\in I$. Let $B\in P_{cp}\left( X\right) $.
By the orbit of $B$ we mean the set $\mathcal{O}\left( B\right) =\cup _{n\in
\mathbb{N}
}\cup _{\alpha \in \Lambda \left( I\right) }f_{\left[ \alpha \right]
_{n}}\left( B\right) $. If $B=\left\{ x\right\} $, for the orbit of $\left\{
x\right\} $ we make the notation $\mathcal{O}\left( x\right) $.
\end{definition}

\begin{definition}
Let $\left( X,d\right) $ be a complete metric space. An orbital contractive
iterated function system is a finite family of continuous functions $\left(
f_{i}\right) _{i\in I},$ where $f_{i}:X\rightarrow X$, having the property
that there exists $C\in \lbrack 0,1)$ such that%
\begin{equation*}
d\left( f_{i}\left( y\right) ,f_{i}\left( z\right) \right) \leq Cd\left(
y,z\right)
\end{equation*}%
for every $x\in X,$ $i\in I$ and $y,z\in \mathcal{O}\left( x\right) .$ We
denote such a system by $\mathcal{S}=\left( \left( X,d\right) ,\left(
f_{i}\right) _{i\in I}\right) .$
\end{definition}

\begin{definition}
The fractal operator associated to the orbital contractive iterated function
system $\mathcal{S}=\left( \left( X,d\right) ,\left( f_{i}\right) _{i\in
I}\right) $ is the function $F_{\mathcal{S}}:P_{cp}\left( X\right)
\rightarrow P_{cp}\left( X\right) $ defined by
\begin{equation*}
F_{\mathcal{S}}\left( K\right) =\cup _{i\in I}f_{i}\left( K\right)
\end{equation*}%
for every $K\in P_{cp}\left( X\right) .$
\end{definition}

\begin{definition}
Every fixed point of the fractal operator associated to an orbital
contractive iterated function system is called an attractor of the system.
\end{definition}

\begin{theorem}
\lbrack see \cite{MS}] Every orbital contractive iterated function system
has attractor. More precisely, the fractal operator associated to such a
system is a weakly Picard operator.
\end{theorem}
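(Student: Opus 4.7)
The overall plan is to mimic the classical Hutchinson fixed-point argument, but to confine every distance comparison to a single orbit $\mathcal{O}(x_0)$, on which the orbital hypothesis really does give Banach-type contractivity. The first step is to prove, by induction on $n$, that for every $x\in X$, every word $\omega=(i_1,\dots,i_n)\in I^n$, and every $y,z\in\mathcal{O}(x)$,
$$d\bigl(f_{i_1}\circ\cdots\circ f_{i_n}(y),\,f_{i_1}\circ\cdots\circ f_{i_n}(z)\bigr)\le C^n\,d(y,z).$$
At each step this reduces to the defining orbital inequality, which applies because $\mathcal{O}(x)$ is forward invariant under every $f_i$: if $y=f_{[\alpha]_m}(x)$ then $f_i(y)=f_{[\beta]_{m+1}}(x)\in\mathcal{O}(x)$ for the word $\beta=(i,\alpha_1,\alpha_2,\dots)$.

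Next, I would fix $K\in P_{cp}(X)$, set $K_n=F_{\mathcal{S}}^n(K)$, and exploit compactness of $K$ together with continuity and finiteness of $\{f_i:i\in I\}$ to define
$$M:=\max_{i\in I}\sup_{x\in K}d(f_i(x),x)<\infty.$$
Any $y\in K_{n+1}$ can be written $y=f_{i_1}\circ\cdots\circ f_{i_{n+1}}(x_0)$ with $x_0\in K$; pairing it with $y'=f_{i_1}\circ\cdots\circ f_{i_n}(x_0)\in K_n$ and using the iterated orbital contraction (both $x_0$ and $f_{i_{n+1}}(x_0)$ belong to $\mathcal{O}(x_0)$) yields $d(y,y')\le C^n d(f_{i_{n+1}}(x_0),x_0)\le C^nM$. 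The symmetric matching, starting from $y'=f_{i_1}\circ\cdots\circ f_{i_n}(x_0)\in K_n$ and pairing with $y=f_{i_1}\circ\cdots\circ f_{i_n}\circ f_{i_n}(x_0)\in K_{n+1}$, gives the same bound. Therefore $h(K_n,K_{n+1})\le C^nM$, and telescoping shows $(K_n)_n$ is Cauchy in $(P_{cp}(X),h)$.

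Completeness of $(P_{cp}(X),h)$ produces a limit $K^*\in P_{cp}(X)$. To check $F_{\mathcal{S}}(K^*)=K^*$, combine the union inequality (\ref{sup}) with Proposition \ref{h_cont} applied to each of the finitely many $f_i$:
$$h\bigl(F_{\mathcal{S}}(K_n),F_{\mathcal{S}}(K^*)\bigr)\le\max_{i\in I}h\bigl(f_i(K_n),f_i(K^*)\bigr)\longrightarrow 0,$$
whence $K_{n+1}=F_{\mathcal{S}}(K_n)\to F_{\mathcal{S}}(K^*)$; since also $K_{n+1}\to K^*$, uniqueness of limits forces $K^*=F_{\mathcal{S}}(K^*)$, which is exactly the weakly Picard property.

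The main obstacle is the first step: because the contraction constant $C$ is only guaranteed for pairs of points that share a common orbit, one must certify that every pair $(y,y')$ appearing in the telescoping bound genuinely lies inside some $\mathcal{O}(x_0)$. The forward invariance of $\mathcal{O}(x_0)$ under each $f_i$ is precisely what makes this bookkeeping work, and it is what permits the Hutchinson-type computation to go through in spite of the much weaker global regularity available for the maps $f_i$.
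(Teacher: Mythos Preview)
Your argument is correct. The paper itself does not supply a proof of this theorem: it is stated with the citation ``[see \cite{MS}]'' and used as a black box, so there is no in-paper proof to compare against. Your approach --- showing $h(F_{\mathcal S}^{n}(K),F_{\mathcal S}^{n+1}(K))\le C^{n}M$ by tracking a common base point $x_0\in K$ so that every relevant pair stays inside $\mathcal O(x_0)$, then invoking completeness of $(P_{cp}(X),h)$ and continuity of $F_{\mathcal S}$ --- is exactly the adaptation of the classical Hutchinson argument one expects, and the bookkeeping you flag (forward invariance of $\mathcal O(x_0)$ under each $f_i$) is indeed the only delicate point and is handled correctly.
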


\begin{notation}
Let $\mathcal{S}=\left( \left( X,d\right) ,\left( f_{i}\right) _{i\in
I}\right) $ be an orbital contractive iterated function system. Then for
every $K\in P_{cp}\left( X\right) $, there exists a set (an attractor)
denoted by $A_{K}\in P_{cp}\left( X\right) $ such that $\lim_{n\rightarrow
\infty }h\left( F_{S}^{n}\left( K\right) ,A_{K}\right) =0$. If $K=\left\{
x\right\} $, we will denote its attractor by $A_{x}$ instead of $A_{\left\{
x\right\} }$.
\end{notation}

\begin{remark}
Let $\mathcal{S}=\left( \left( X,d\right) ,\left( f_{i}\right) _{i\in
I}\right) $ be an orbital contractive iterated function system. As for every
$x\in X$ the sequence $\left( F_{\mathcal{S}}^{n}\left( \left\{ x\right\}
\right) \right) _{n}$ is convergent to $A_{x}$, we have $\overline{\mathcal{O%
}\left( x\right) }=\mathcal{O}\left( x\right) \cup A_{x}$.
\end{remark}

Using a technique similar with the one used in \cite{Savu}, one can prove
the following:

\begin{proposition}
\label{Ak}Let $\mathcal{S}=\left( \left( X,d\right) ,\left( f_{i}\right)
_{i\in I}\right) $ be an orbital contractive iterated function system. If $\
$the sequence $\left( K_{n}\right) _{n}$ with $K_{n}\in P_{cp}\left(
X\right) $ converges to an element $K\in P_{cp}\left( X\right) ,$ then $%
\left( A_{K_{n}}\right) _{n}$ converges to $A_{K}.$
\end{proposition}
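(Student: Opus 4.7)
The strategy is to bound
\[
h(A_{K_n}, A_K) \le h(A_{K_n}, F_{\mathcal{S}}^m(K_n)) + h(F_{\mathcal{S}}^m(K_n), F_{\mathcal{S}}^m(K)) + h(F_{\mathcal{S}}^m(K), A_K),
\]
by choosing $m$ first (large, depending on the target $\varepsilon$) and then $n$ afterwards (large, depending on $m$). The third summand goes to $0$ as $m\to\infty$ by the Notation introducing $A_K$. For each fixed $m$, the middle summand goes to $0$ as $n\to\infty$: iterated application of Proposition \ref{h_cont} together with the inequality (\ref{sup}) shows that $F_{\mathcal{S}}^m$ is continuous on $\bigl(P_{cp}(X), h\bigr)$. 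So the crux is to control the first summand \emph{uniformly in $n$}.

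For that uniform bound, I would gather all starting compacts into $\widetilde K := K \cup \bigcup_{n\ge 0} K_n$, which is compact by Proposition \ref{reun_comp}, and set $M := \sup_{k \in \widetilde K}\max_{i\in I} d(k, f_i(k))$; this is finite because $\widetilde K$ is compact and each $f_i$ is continuous. The key lemma is then the uniform Cauchy estimate
\[
h\bigl(F_{\mathcal{S}}^{m+1}(K'), F_{\mathcal{S}}^m(K')\bigr) \le C^m M \qquad \text{for every } K' \in P_{cp}(X) \text{ with } K' \subseteq \widetilde K.
\]
To prove it, take $k' \in K'$ and a word $\alpha_1\cdots\alpha_{m+1}$, and compare $y = f_{\alpha_1}\circ\cdots\circ f_{\alpha_{m+1}}(k')$ with its parent $z = f_{\alpha_1}\circ\cdots\circ f_{\alpha_m}(k')$. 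At every intermediate step the two tails $f_{\alpha_{j+1}}\circ\cdots\circ f_{\alpha_{m+1}}(k')$ and $f_{\alpha_{j+1}}\circ\cdots\circ f_{\alpha_m}(k')$ both lie in $\mathcal{O}(k')$, so orbital contractivity (with $x = k'$) peels off one letter at a time, yielding $d(y,z) \le C^m d(f_{\alpha_{m+1}}(k'), k') \le C^m M$. The reverse direction is handled symmetrically by appending a letter rather than dropping one, which puts us in the same setup relative to the orbit of $k'$.

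A standard telescoping sum then gives $h\bigl(F_{\mathcal{S}}^m(K'), A_{K'}\bigr) \le \tfrac{C^m}{1-C}\, M$ for every compact $K' \subseteq \widetilde K$, with $M$ \emph{independent} of $K'$. Specialising to $K' = K_n$ furnishes the uniform-in-$n$ estimate on the first summand, and combining the three bounds finishes the proof. The main obstacle is precisely this uniform Cauchy estimate: it rests on the bookkeeping that all intermediate pairs must lie in the orbit of a \emph{single} base point $k' \in \widetilde K$ for the orbital-contractivity hypothesis to be legitimately applicable; confining all base points in the compact $\widetilde K$ is what ultimately makes $M$ uniform in $n$.
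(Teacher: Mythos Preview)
Your argument is correct. The three-term splitting, the continuity of $F_{\mathcal S}^m$ on $P_{cp}(X)$ via Proposition~\ref{h_cont} and (\ref{sup}), and the uniform Cauchy estimate obtained by confining all base points to the single compact $\widetilde K$ (so that $M$ is finite and independent of $n$) all go through exactly as you describe; the orbital-contractivity bookkeeping is handled correctly since at every step both tails lie in $\mathcal O(k')$.

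As for comparison: the paper does not actually supply a proof of this proposition. It only says that ``using a technique similar with the one used in \cite{Savu}, one can prove'' the result, and then states it without argument. Your approach---uniform geometric Cauchy estimate over the compact union $\widetilde K$, followed by the standard $\varepsilon/3$ splitting---is precisely the natural technique for this kind of continuity-of-attractor statement in the orbital setting, and is in all likelihood what the reference \cite{Savu} does as well. So there is nothing substantive to contrast; your write-up simply fills in what the paper left to the reader.
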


\begin{lemma}
\label{convpeorb}Let $\mathcal{S}=\left( \left( X,d\right) ,\left(
f_{i}\right) _{i\in I}\right) $ be an orbital contractive iterated function
system. Let $\left( t_{n}\right) _{n\in
\mathbb{N}
}$ and $\left( z_{n}\right) _{n\in
\mathbb{N}
}$ be two sequences with elements from $X$ such that $\displaystyle%
\lim_{n\rightarrow \infty }t_{n}=t$, $\displaystyle\lim_{n\rightarrow \infty
}z_{n}=z$ with $t,z\in X$ and $z_{n}\in \overline{\mathcal{O}\left(
t_{n}\right) }$ for all $n\in
\mathbb{N}
$. Then, $z\in \overline{\mathcal{O}\left( t\right) }$.
\end{lemma}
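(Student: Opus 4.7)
The plan is to split the argument on the decomposition $\overline{\mathcal{O}(t_n)}=\mathcal{O}(t_n)\cup A_{t_n}$ provided by the remark preceding Proposition~\ref{Ak}. After passing to a subsequence (which still converges to $z$) we may assume that either $z_n\in A_{t_n}$ for every $n$, or $z_n\in\mathcal{O}(t_n)$ for every $n$. In the first situation Proposition~\ref{Ak} applied to the trivial convergence $\{t_n\}\to\{t\}$ in $(P_{cp}(X),h)$ yields $h(A_{t_n},A_t)\to 0$, so $d(z_n,A_t)\le h(A_{t_n},A_t)\to 0$, and the closedness of $A_t$ forces $z\in A_t\subseteq\overline{\mathcal{O}(t)}$.

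Assume now $z_n=f_{[\alpha_n]_{k_n}}(t_n)$ for some $\alpha_n\in\Lambda(I)$ and $k_n\in\mathbb{N}$. If $(k_n)$ admits a bounded subsequence, the finiteness of $I$ enables a pigeonhole argument: on a further subsequence $[\alpha_n]_{k_n}$ is a single fixed word $\beta$, and continuity gives $z_n=f_\beta(t_n)\to f_\beta(t)\in\mathcal{O}(t)$. Otherwise $k_n\to\infty$. Orbital contractivity on each $\mathcal{O}(t_n)$ extends by continuity to $\overline{\mathcal{O}(t_n)}$, so $f_{[\alpha_n]_{k_n}}$ is $C^{k_n}$-Lipschitz on $\overline{\mathcal{O}(t_n)}$. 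Because $A_{t_n}$ is a fixed point of $F_{\mathcal{S}}$, one has $f_i(A_{t_n})\subseteq A_{t_n}$ for every $i$, and hence, for any $a_n\in A_{t_n}$, the image $w_n:=f_{[\alpha_n]_{k_n}}(a_n)$ still lies in $A_{t_n}$ and satisfies
\[
 d(z_n,w_n)\le C^{k_n}\,d(t_n,a_n).
\]

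The remaining step, which is also the main obstacle, is to bound $d(t_n,a_n)$ uniformly in $n$: orbital contractivity is a per-starting-point condition, so this uniformity must be installed by hand. I would use Proposition~\ref{Ak} to fix some $a\in A_t$ and choose $a_n\in A_{t_n}$ with $d(a_n,a)\le h(A_{t_n},A_t)+1/n$, so that $a_n\to a$ and consequently $\bigl(d(t_n,a_n)\bigr)_n$ is bounded by some constant $D$; then $d(z_n,w_n)\le C^{k_n}D\to 0$. Combining with $w_n\in A_{t_n}$ and $h(A_{t_n},A_t)\to 0$, the triangle inequality yields $d(z,A_t)=0$, whence $z\in A_t\subseteq\overline{\mathcal{O}(t)}$ and the lemma follows.
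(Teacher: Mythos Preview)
Your proof is correct and follows the same overall strategy as the paper's: split according to $\overline{\mathcal{O}(t_n)}=\mathcal{O}(t_n)\cup A_{t_n}$, handle the attractor piece with Proposition~\ref{Ak}, and handle the bounded word-length case by continuity. The only notable difference is that you explicitly treat the subcase $k_n\to\infty$ via the Lipschitz estimate $d(z_n,w_n)\le C^{k_n}d(t_n,a_n)$, whereas the paper simply declares that after passing to a subsequence one lands in either ``Case~1: $z_n\in F_{\mathcal{S}}^{m}(\{t_n\})$ for a fixed $m$'' or ``Case~2: $h(\{z_n\},A_{t_n})\to 0$'' without justifying why unbounded depth forces Case~2 --- so your argument is in fact slightly more complete on this point (and your pigeonhole down to a fixed word $\beta$, versus the paper's fixing only the level $m$ and invoking Proposition~\ref{h_cont}, is a harmless variation).
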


\begin{proof}
We have that for all $K\in P_{cp}\left( X\right) $, there exists a set $%
A_{K}\in P_{cp}\left( X\right) $ such that $\displaystyle\lim_{n\rightarrow
\infty }F_{\mathcal{S}}^{n}\left( K\right) =A_{K}$. Passing to a
subsequence, we distinguish the following cases:

\begin{case}
There exists $m\in
\mathbb{N}
$ such that $z_{n}\in F_{\mathcal{S}}^{m}\left( \left\{ t_{n}\right\}
\right) $ for all $n\in
\mathbb{N}
$.
\end{case}

As $F_{\mathcal{S}}$ is uniformly continuous on compact sets and $\left(
t_{n}\right) _{n\in
\mathbb{N}
}$ converges to $t$, applying Proposition \ref{h_cont} we have%
\begin{equation}
\lim_{n\rightarrow \infty }F_{\mathcal{S}}^{m}\left( \left\{ t_{n}\right\}
\right) =F_{\mathcal{S}}^{m}\left( \left\{ t\right\} \right) \text{.}
\label{0}
\end{equation}

As $z_{n}\in F_{\mathcal{S}}^{m}\left( \left\{ t_{n}\right\} \right) $ and $%
\left( z_{n}\right) _{n}$ is convergent to $z$, applying (\ref{0}) we obtain
$z\in \overline{F_{\mathcal{S}}^{m}\left( \left\{ t\right\} \right) }=F_{%
\mathcal{S}}^{m}\left( \left\{ t\right\} \right) $ and we conclude that $%
z\in \overline{\mathcal{O}\left( t\right) }$.

\begin{case}
\begin{equation*}
\lim_{n\rightarrow \infty }h\left( \left\{ z_{n}\right\} ,A_{t_{n}}\right)
=0.
\end{equation*}
\end{case}

We deduce there is a sequence $\left( s_{n}\right) _{n\in
\mathbb{N}
}$ such that $s_{n}\in A_{t_{n}}$ for all $n\in
\mathbb{N}
$ and $\displaystyle d\left( z_{n},s_{n}\right) \leq h\left( \left\{
z_{n}\right\} ,A_{t_{n}}\right) +\frac{1}{n}$. As $\displaystyle%
\lim_{n\rightarrow \infty }h\left( \left\{ z_{n}\right\} ,A_{t_{n}}\right)
=0 $ and $\displaystyle\lim_{n\rightarrow \infty }z_{n}=z$ we obtain%
\begin{equation}
\lim_{n\rightarrow \infty }s_{n}=z\text{.}  \label{01}
\end{equation}

Using Proposition \ref{Ak} and the fact that $\lim_{n\rightarrow \infty
}t_{n}=t$ we deduce%
\begin{equation}
\lim_{n\rightarrow \infty }h\left( A_{t_{n}},A_{t}\right) =0\text{.}
\label{02}
\end{equation}

Applying (\ref{01}), (\ref{02}) and taking into account that $s_{n}\in
A_{t_{n}}$ for all $n\in
\mathbb{N}
$ we conclude that $z\in A_{t}\subset \overline{\mathcal{O}\left( t\right) }$%
.
\end{proof}

\bigskip

\textbf{The shift (code) space for an iterated function system}

\bigskip

$%
\mathbb{N}
$ denotes the natural numbers, $%
\mathbb{N}
^{\ast }=%
\mathbb{N}
$ $\diagdown \left\{ 0\right\} $ and $%
\mathbb{N}
_{n}^{\ast }=\left\{ 1,2,...,n\right\} $, where $n\in
\mathbb{N}
^{\ast }$. Given two sets $A$ and $B$, by $B^{A}$ we mean the set of all
functions from $A$ to $B$.

For a set $I,$ we use the notation $I^{%
\mathbb{N}
_{n}^{\ast }}\overset{not}{=}\Lambda _{n}\left( I\right) $, hence the
elements of $\Lambda _{n}\left( I\right) $ can be written as words $\omega
=\omega _{1}\omega _{2}...\omega _{n}$ with $n$ letters from $I$, and in
this case $n$ is called the length of $\omega $ and it is denoted by $%
|\omega |.$ We also use the notation $\cup _{n\in
\mathbb{N}
}\Lambda _{n}\left( I\right) =\Lambda ^{\ast }\left( I\right) $, where $%
\Lambda _{0}\left( I\right) $ consists of a single element, namely the empty
word denoted by $\lambda $. Hence, $\Lambda ^{\ast }\left( I\right) $ is the
set of all finite words with letters from $I.$

For a family of functions $\left( f_{i}\right) _{i\in I},$ where $%
f_{i}:X\rightarrow X$ and $\omega =\omega _{1}\omega _{2}...\omega _{n}\in
\Lambda _{n}\left( I\right) $, we use the following notation: $f_{\omega
}=f_{\omega _{1}}\circ ...\circ f_{\omega _{n}}.$

For a set $I$, by $\Lambda \left( I\right) $ we mean the set $I^{%
\mathbb{N}
^{\ast }}$. The elements of $\Lambda \left( I\right) $ can be written as
infinite words, namely $\omega =\omega _{1}\omega _{2}...\omega _{n}...$ .
For $\omega \in \Lambda \left( I\right) $ and $n\in
\mathbb{N}
^{\ast }$, by $[\omega ]_{n}$ we mean the word formed with the first $n$
letters of $\omega $. For $\omega \in \Lambda _{m}\left( I\right) $ and $%
n\in
\mathbb{N}
^{\ast }$, by $[\omega ]_{n}$ we mean the word formed with the first $n$
letters of $\omega $ if $m\geq n$, or the word $\omega $ if $m\leq n$.

For a fixed $c\in \lbrack 0,1)$, we define a function $d_{c}:\Lambda
(I)\times \Lambda (I)\rightarrow \lbrack 0,\infty )$ by
\begin{equation*}
d_{c}\left( \alpha ,\beta \right) =\sum_{n\geq 1}c^{n}\left( 1-\delta
_{\alpha _{n}}^{\beta _{n}}\right) ,
\end{equation*}%
for all $\alpha ,\beta \in \Lambda (I)$, where by $\alpha _{n}$ we mean the
letter on position $n$ in $\alpha $ and $\delta _{\alpha _{n}}^{\beta
_{n}}=\left\{
\begin{array}{c}
1\text{, if }\alpha _{n}=\beta _{n} \\
0\text{, if }\alpha _{n}\neq \beta _{n}%
\end{array}%
\right. $ for all $n\in
\mathbb{N}
^{\ast }$.

\begin{remark}
\label{compact}$\left( \Lambda (I),d_{c}\right) $ is a complete metric
space. If $I$ is finite, then $\left( \Lambda (I),d_{c}\right) $ is compact.
\end{remark}

\bigskip

\textbf{Fuzzy sets }

\bigskip

\begin{definition}
Let $u:X\rightarrow
\mathbb{R}
$. The function $u$ is upper semicontinuous (usc) if, for each $\alpha \in
\mathbb{R}
$, the set $u^{-1}\left( [\alpha ,\infty )\right) :=\left\{ x\in X\text{ }|%
\text{ }u\left( x\right) \geq \alpha \right\} $ is closed.
\end{definition}

\begin{remark}
The function $u:X\rightarrow
\mathbb{R}
$ is usc if and only if%
\begin{equation*}
\overline{\lim_{x\rightarrow x_{0}}}u\left( x\right) \leq u\left(
x_{0}\right)
\end{equation*}%
for all $x_{0}\in X$.
\end{remark}

\begin{lemma}
\label{lema1}[see \cite{Str}] Let $\left( X,d\right) $ be a metric space and
$u:X\rightarrow
\mathbb{R}
$ an usc function. Then the restriction of $u$ to every set $K\in
P_{cp}\left( X\right) $ is a superior bounded function. Moreover, there
exists $x_{0}\in K$ such that $u\left( x_{0}\right) =\sup_{x\in K}u\left(
x\right) $.
\end{lemma}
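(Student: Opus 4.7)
The result is the standard fact that an upper semicontinuous real-valued function on a compact set is bounded above and attains its supremum; the proof should only use the defining property of usc, namely that every superlevel set $u^{-1}([\alpha,\infty))$ is closed in $X$, together with the finite intersection property of compact sets. I will handle the two assertions in order.

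For the first assertion (bounded above on $K$), I would argue by contradiction. Suppose $\sup_{x\in K}u(x)=+\infty$. For each $n\in\mathbb{N}$ set $F_{n}:=K\cap u^{-1}([n,\infty))$. Since $K$ is compact (hence closed) and $u^{-1}([n,\infty))$ is closed by upper semicontinuity, $F_{n}$ is a closed subset of the compact set $K$, hence itself compact. By the contradictory assumption each $F_{n}$ is non-empty, and clearly $F_{1}\supseteq F_{2}\supseteq\ldots$ is a descending chain. The finite intersection property then yields $\bigcap_{n\in\mathbb{N}}F_{n}\neq\emptyset$, and any point $x$ in this intersection satisfies $u(x)\geq n$ for every $n$, which is impossible since $u(x)\in\mathbb{R}$.

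For the second assertion (attainment), put $M:=\sup_{x\in K}u(x)$, which is finite by the previous step. For $n\in\mathbb{N}^{\ast}$ set $G_{n}:=K\cap u^{-1}([M-\tfrac{1}{n},\infty))$. By the definition of supremum each $G_{n}$ is non-empty, by upper semicontinuity each $G_{n}$ is closed in $K$ and therefore compact, and $(G_{n})_{n}$ is descending. Invoking the finite intersection property once more produces $x_{0}\in\bigcap_{n}G_{n}$. This $x_{0}$ satisfies $u(x_{0})\geq M-\tfrac{1}{n}$ for every $n\in\mathbb{N}^{\ast}$, hence $u(x_{0})\geq M$; the reverse inequality $u(x_{0})\leq M$ is built into the definition of $M$, so $u(x_{0})=M$.

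I do not foresee any genuine obstacle here: the argument is a direct adaptation of the extreme value theorem to usc functions, and both parts reduce to one and the same tool (nested non-empty compact sets have non-empty intersection). The only thing to be careful about is to use the characterization of usc via closedness of the superlevel sets exactly as stated in the paper, rather than the equivalent limsup formulation given in the preceding remark.
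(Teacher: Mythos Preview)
Your argument is correct and is the standard proof of the extreme value theorem for upper semicontinuous functions via the finite intersection property of compact sets; both steps are carried out cleanly using exactly the closedness of superlevel sets, as required. Note, however, that the paper does not supply its own proof of this lemma: it is stated with a reference to \cite{Str} and used as a black box, so there is no in-paper argument to compare your approach against.
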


\begin{definition}
Let $u:X\rightarrow \lbrack 0,1]$. We say that $u$ is a fuzzy subset of $X$.
The family of fuzzy subsets of $X$ is denoted by $\mathcal{F}_{X}$.
\end{definition}

\begin{definition}
Given $\alpha \in (0,1]$ and $u\in \mathcal{F}_{X}$, the grey level or $%
\alpha $-cut of $u$ is the set
\begin{equation*}
\left[ u\right] ^{\alpha }:=\left\{ x\in X\text{ }|\text{ }u\left( x\right)
\geq \alpha \right\} .
\end{equation*}%
For $\alpha =0$ we define
\begin{equation*}
\left[ u\right] ^{0}:=\text{supp}\left( u\right) :=\overline{\cup _{\alpha
\in (0,1]}[u]^{\alpha }}=\overline{\left\{ x\in X\text{ }|\text{ }u\left(
x\right) >0\right\} }.
\end{equation*}
\end{definition}

\begin{definition}
A fuzzy set $u\in \mathcal{F}_{X}$ is

a) normal, if there is $x\in X$ such that $u\left( x\right) =1$;

b) compactly supported if $\left[ u\right] ^{0}$ is compact.
\end{definition}

\begin{notation}
\begin{equation*}
\mathcal{F}_{X}^{\ast \ast }=\left\{ u\in \mathcal{F}_{X}\text{ }|\text{ }u%
\text{ is normal and compactly supported}\right\} \text{;}
\end{equation*}%
\begin{equation*}
\mathcal{F}_{X}^{\ast }=\left\{ u\in \mathcal{F}_{X}^{\ast \ast }\text{ }|%
\text{ }u\text{ is usc}\right\} .
\end{equation*}
\end{notation}

\begin{definition}
(Zadeh's Extension Principle) Let $\left( X,d\right) $ and $\left( Y,\rho
\right) $ be two metric spaces. Given a map $T:X\rightarrow Y$ and $u\in
\mathcal{F}_{X}$, we define a new fuzzy set $T\left( u\right) \in \mathcal{F}%
_{Y}$ as follows:%
\begin{equation*}
T\left( u\right) :Y\rightarrow \lbrack 0,1]
\end{equation*}%
\begin{equation*}
T\left( u\right) \left( y\right) :=\left\{
\begin{array}{c}
\sup_{T\left( x\right) =y}u\left( x\right) ,\text{ if }y\in T\left( X\right)
\\
0,\text{ otherwise}%
\end{array}%
\right.
\end{equation*}%
for all $y\in Y$.
\end{definition}

\begin{proposition}
\label{T(u)}[see \cite{Str}] Let $\left( X,d\right) $ and $\left( Y,\rho
\right) $ be two metric spaces and $T:X\rightarrow Y$ a continuous map. Let $%
u\in \mathcal{F}_{X}$. We have

a) If $u$ is normal, then $T\left( u\right) $ is normal;

b) If $u$ is compactly supported, then $T\left( u\right) $ is compactly
supported;

c) If $u$ is usc and compactly supported, then $T\left( u\right) $ is usc
and compactly supported.
\end{proposition}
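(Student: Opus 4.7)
Part (a) is the easiest. Since $u$ is normal, there exists $x_{0}\in X$ with $u(x_{0})=1$. Setting $y_{0}=T(x_{0})\in T(X)$, the definition of Zadeh's extension gives
\[
T(u)(y_{0})=\sup_{T(x)=y_{0}}u(x)\geq u(x_{0})=1,
\]
so $T(u)(y_{0})=1$ and $T(u)$ is normal.

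For part (b), I would first argue that
\[
\{y\in Y\ |\ T(u)(y)>0\}=T(\{x\in X\ |\ u(x)>0\}).
\]
The inclusion $\subseteq$ follows because $T(u)(y)>0$ forces the existence of some $x$ with $T(x)=y$ and $u(x)>0$; the reverse inclusion is immediate from the definition of $T(u)$. Taking closures and using that $T$ is continuous,
\[
[T(u)]^{0}=\overline{T(\{u>0\})}\subseteq\overline{T([u]^{0})}=T([u]^{0}),
\]
the last equality because $[u]^{0}$ is compact and $T$ is continuous, so $T([u]^{0})$ is compact and hence closed. Therefore $[T(u)]^{0}$ is a closed subset of the compact set $T([u]^{0})$, and so is itself compact.

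Part (c) is the substantive one; the compact-support conclusion is already handled by (b), so the point is to prove that $T(u)$ is usc whenever $u$ is usc and compactly supported. I would verify this via the closed-level-set characterization: fix $\alpha\in(0,1]$ and show that $L_{\alpha}:=\{y\in Y\ |\ T(u)(y)\geq\alpha\}$ is closed. Take a sequence $(y_{n})\subseteq L_{\alpha}$ with $y_{n}\to y$ in $Y$. For each $n$, the set $T^{-1}(y_{n})\cap[u]^{0}$ is the intersection of a closed set (preimage of a point under a continuous map) with the compact set $[u]^{0}$, hence compact; and since $u\equiv 0$ outside $[u]^{0}$ (by definition of the support) together with $T(u)(y_{n})\geq\alpha>0$, the supremum in the definition of $T(u)(y_{n})$ is actually attained on this compact set by Lemma \ref{lema1}. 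Thus one obtains $x_{n}\in T^{-1}(y_{n})\cap[u]^{0}$ with $u(x_{n})\geq\alpha$.

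Now I would exploit compactness of $[u]^{0}$: extract a subsequence $x_{n_{k}}\to x^{\ast}\in[u]^{0}$. By continuity of $T$, $T(x^{\ast})=\lim T(x_{n_{k}})=\lim y_{n_{k}}=y$, and by upper semicontinuity of $u$,
\[
u(x^{\ast})\geq\overline{\lim_{k\to\infty}}\,u(x_{n_{k}})\geq\alpha.
\]
Hence $T(u)(y)\geq u(x^{\ast})\geq\alpha$, so $y\in L_{\alpha}$ and $L_{\alpha}$ is closed. The main obstacle, as indicated, is ensuring that the suprema defining $T(u)(y_{n})$ are realized by actual points $x_{n}$ that can be processed by a compactness argument; this is exactly what the combination of compact support, upper semicontinuity, and Lemma \ref{lema1} provides.
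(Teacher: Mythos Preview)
Your argument is correct in all three parts. Note, however, that the paper does not supply its own proof of this proposition: it is stated with the citation ``[see \cite{Str}]'' and used as a black box, so there is nothing to compare your approach against within this paper. What you have written is a clean, self-contained verification along standard lines---part (a) is immediate, part (b) identifies $\{T(u)>0\}=T(\{u>0\})$ and uses that $T([u]^{0})$ is compact hence closed, and part (c) combines compact support with Lemma~\ref{lema1} to realize the suprema and then runs the expected subsequence-plus-usc argument. One cosmetic remark: in (c) you restrict to $\alpha\in(0,1]$, which is harmless since for $\alpha\leq 0$ the level set is all of $Y$ and for $\alpha>1$ it is empty, but it would do no harm to say so explicitly.
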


\begin{remark}
Using the above Proposition, we deduce that if $u\in \mathcal{F}_{X}^{\ast }$%
\ (or $u\in \mathcal{F}_{X}^{\ast \ast }$), then $T\left( u\right) \in
\mathcal{F}_{X}^{\ast }$ (or $T\left( u\right) \in \mathcal{F}_{X}^{\ast
\ast }$) for all continuous functions $T:X\rightarrow Y$.
\end{remark}

\begin{proposition}
\lbrack see \cite{Str}] If $u\in \mathcal{F}_{X}^{\ast }$, then for every $%
\alpha \in \left[ 0,1\right] $, the $\alpha $-cut set of $u$ is nonempty and
compact.
\end{proposition}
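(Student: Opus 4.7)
The plan is to separate the claims (nonemptyness and compactness) and handle the cases $\alpha=0$ and $\alpha\in(0,1]$ separately, since the definition of $[u]^{0}$ is the closed support while $[u]^{\alpha}$ for $\alpha>0$ is the superlevel set. The argument should read off almost directly from the hypotheses, so I do not expect a genuine obstacle; the only thing to watch is that the usc condition gives closedness but not compactness on its own, and that for $\alpha=0$ the set is defined by a closure rather than by the preimage $u^{-1}([0,\infty))=X$.

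For nonemptyness, I would use normality. Since $u\in \mathcal F_{X}^{\ast}$, there exists $x_{0}\in X$ with $u(x_{0})=1$. For any $\alpha\in(0,1]$ we have $u(x_{0})=1\geq\alpha$, so $x_{0}\in [u]^{\alpha}$. For $\alpha=0$, since $u(x_{0})=1>0$ we have $x_{0}\in\{x\in X\mid u(x)>0\}\subseteq\overline{\{x\in X\mid u(x)>0\}}=[u]^{0}$. Thus $[u]^{\alpha}\neq\emptyset$ for every $\alpha\in[0,1]$.

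For compactness, the case $\alpha=0$ is immediate: $[u]^{0}=\mathrm{supp}(u)$ is compact because $u$ is compactly supported by hypothesis. For $\alpha\in(0,1]$, I would use upper semicontinuity: by the definition of usc, the set
\begin{equation*}
[u]^{\alpha}=u^{-1}\left([\alpha,\infty)\right)=\{x\in X\mid u(x)\geq\alpha\}
\end{equation*}
is closed in $X$. Moreover, if $x\in[u]^{\alpha}$ then $u(x)\geq\alpha>0$, so $x\in\{u>0\}\subseteq[u]^{0}$. Hence $[u]^{\alpha}$ is a closed subset of the compact set $[u]^{0}$, and is therefore compact.

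Putting the two ingredients together gives the statement. The subtlety I would flag, to keep the argument clean, is to not conflate $[u]^{0}$ (the closure of the positivity set) with $u^{-1}([0,\infty))=X$; the usc property produces closed superlevel sets only for $\alpha>0$ in the form needed here, and the $\alpha=0$ case is instead handled by the very definition of ``compactly supported''.
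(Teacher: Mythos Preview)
Your proof is correct and is the standard direct argument. Note that the paper does not actually prove this proposition; it is stated with a reference to \cite{Str} and no proof is given, so there is nothing to compare against beyond observing that your argument is exactly the expected one: normality gives nonemptyness, compact support handles $\alpha=0$, and for $\alpha\in(0,1]$ upper semicontinuity makes $[u]^{\alpha}$ closed while the inclusion $[u]^{\alpha}\subseteq[u]^{0}$ into a compact set yields compactness.
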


The topology on $\mathcal{F}_{X}^{\ast \ast }$ is defined using the
Hausdorff-Pompeiu semidistance between the $\alpha $-cuts. Since $%
P_{b}\left( X\right) $ contains all the $\alpha $-cuts, we can define a
semidistance $d_{\infty }$ in $\mathcal{F}_{X}^{\ast \ast }$ by
\begin{equation*}
d_{\infty }\left( u,v\right) =\sup_{\alpha \in \left[ 0,1\right] }h\left( %
\left[ u\right] ^{\alpha },\left[ v\right] ^{\alpha }\right)
\end{equation*}%
for every $u,v\in \mathcal{F}_{X}^{\ast \ast }$. The restriction of $%
d_{\infty }$ to $\mathcal{F}_{X}^{\ast }$ is a metric, since in this case
the $\alpha $-cuts belong to $P_{cp}\left( X\right) $.

\begin{theorem}
\label{complete}[see \cite{Str}] $\ \left( \mathcal{F}_{X}^{\ast },d_{\infty
}\right) $ is a complete metric space provided $\left( X,d\right) $ is
complete.
\end{theorem}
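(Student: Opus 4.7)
The plan is to take a Cauchy sequence $(u_n)_n$ in $(\mathcal{F}_X^*, d_\infty)$ and construct a limit $u \in \mathcal{F}_X^*$. The natural strategy is to work level-set by level-set. From $h([u_n]^\alpha, [u_m]^\alpha) \leq d_\infty(u_n, u_m)$, the sequence $([u_n]^\alpha)_n$ is Cauchy in $(P_{cp}(X), h)$ uniformly in $\alpha \in [0,1]$; invoking completeness of $(P_{cp}(X), h)$ recorded above, there exists $K_\alpha \in P_{cp}(X)$ with $[u_n]^\alpha \to K_\alpha$ in Hausdorff-Pompeiu metric, and letting $m \to \infty$ in the Cauchy bound yields $\sup_\alpha h([u_n]^\alpha, K_\alpha) \to 0$. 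A standard Hausdorff-limit argument shows that the nestedness $[u_n]^\alpha \subseteq [u_n]^\beta$ (for $\alpha \geq \beta$) passes to the limit, so $K_\alpha \subseteq K_\beta$ whenever $\alpha \geq \beta$.

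Define the candidate limit by $u(x) = \sup\{\alpha \in [0,1] : x \in K_\alpha\}$ with the convention $\sup \emptyset := 0$. The crux of the proof is to verify $[u]^\alpha = K_\alpha$ for all $\alpha \in (0,1]$. The inclusion $K_\alpha \subseteq [u]^\alpha$ is immediate from the definition. For the converse, if $u(x) \geq \alpha$ then $x \in K_\beta$ for every $\beta < \alpha$; given $\varepsilon > 0$, I would fix $N$ so that $\sup_\beta h([u_N]^\beta, K_\beta) < \varepsilon/2$, then for each $\beta_k \nearrow \alpha$ choose $y_k \in [u_N]^{\beta_k}$ with $d(x, y_k) < \varepsilon/2 + 1/k$. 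Since the $y_k$ lie in the compact set $[u_N]^0$, a subsequence converges to some $y^* \in X$; upper semi-continuity of $u_N$ combined with $u_N(y_k) \geq \beta_k$ gives $u_N(y^*) \geq \alpha$, so $y^* \in [u_N]^\alpha$ and $d(x, y^*) \leq \varepsilon/2$. Therefore $d(x, K_\alpha) \leq d(x, y^*) + h([u_N]^\alpha, K_\alpha) \leq \varepsilon$, and since $K_\alpha$ is closed and $\varepsilon$ arbitrary, $x \in K_\alpha$. This is the step I expect to be the main obstacle, because it requires combining the uniform level-set convergence with the \emph{individual} upper semi-continuity of the $u_N$'s to recover closedness at the limiting threshold $\alpha$.

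Once this identification is established, the remaining verifications are bookkeeping. Normality follows because each $[u_n]^1$ is nonempty, hence $K_1 = [u]^1$ is nonempty and any of its points carries value $1$; upper semi-continuity of $u$ follows because every level set $[u]^\alpha = K_\alpha$ is closed. For compact support, I would show $[u]^0 = K_0$ by a diagonal argument analogous to the one above: if $x \in K_0$ then there exist $x_n \in [u_n]^0 = \overline{\{u_n > 0\}}$ converging to $x$, approximated by $y_n \in [u_n]^{\alpha_n}$ with $\alpha_n > 0$; the uniform convergence $h([u_n]^{\alpha_n}, K_{\alpha_n}) \to 0$ then produces $z_n \in K_{\alpha_n} \subseteq \cup_{\alpha > 0} K_\alpha$ with $z_n \to x$, so $x \in \overline{\cup_{\alpha > 0} K_\alpha} = [u]^0$, while the reverse inclusion is immediate from nestedness. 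Finally, the convergence $d_\infty(u_n, u) = \sup_\alpha h([u_n]^\alpha, K_\alpha) \to 0$ is exactly the uniform level-set convergence extracted in the first step, so $u_n \to u$ in $(\mathcal{F}_X^*, d_\infty)$ and completeness is established.
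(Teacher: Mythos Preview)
Your argument is correct and follows the standard level-set approach to completeness of $(\mathcal{F}_X^*,d_\infty)$: extract uniform Hausdorff limits $K_\alpha$ of the $\alpha$-cuts, recover the limiting fuzzy set via $u(x)=\sup\{\alpha:x\in K_\alpha\}$, and then identify $[u]^\alpha=K_\alpha$ using the upper semicontinuity of a fixed $u_N$ at the threshold $\alpha$. The step you flag as the main obstacle is handled correctly, and the treatment of the $\alpha=0$ case is also fine once you invoke nestedness and closedness of $K_0$.

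However, there is nothing to compare against: the paper does \emph{not} prove Theorem~\ref{complete}. It is stated with the tag ``[see \cite{Str}]'' and is quoted as a known result from Oliveira--Strobin; no proof appears in the paper itself. So your proposal is not a variant of the authors' argument but rather a self-contained substitute for a result they import from the literature. Your proof is in fact the same in spirit as the one in the cited source (and as the classical Diamond--Kloeden argument for fuzzy-number spaces), so if you wanted to include it in the paper it would serve as a faithful expansion of the citation.
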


\begin{remark}
A particular case for the below Lemma (when $u,v\in \mathcal{F}_{X}^{\ast }$%
) can be found in \cite{Str}. Here, we give a simpler proof then the one
presented in \cite{Str}.
\end{remark}

\begin{lemma}
\label{ldinf}
\begin{equation*}
d_{\infty }\left( u,v\right) =\sup_{\alpha \in (0,1]}h\left( \left[ u\right]
^{\alpha },\left[ v\right] ^{\alpha }\right)
\end{equation*}%
for every $u,v\in \mathcal{F}_{X}^{\ast \ast }$.
\end{lemma}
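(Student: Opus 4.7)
The plan is to show that dropping the level $\alpha=0$ from the supremum does not change its value, i.e., it suffices to prove
\[
h\bigl([u]^{0},[v]^{0}\bigr)\leq \sup_{\alpha\in(0,1]}h\bigl([u]^{\alpha},[v]^{\alpha}\bigr),
\]
since the reverse inequality is trivial from the definition of $d_{\infty}$. By the definition of the support, $[u]^{0}=\overline{\bigcup_{\alpha\in(0,1]}[u]^{\alpha}}$ and analogously for $v$, so the question reduces to controlling the Hausdorff--Pompeiu semidistance between the closures of two unions of $\alpha$-cuts.

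First I would record the elementary fact that the Hausdorff--Pompeiu semidistance is insensitive to closure, that is, $h(A,B)=h(\overline{A},\overline{B})$ for every $A,B\in P_{b}(X)$. This is immediate from the formula $d(A,B)=\sup_{x\in A}\mathrm{dist}(x,B)$ together with the continuity of $x\mapsto \mathrm{dist}(x,B)$ (which equals $\mathrm{dist}(x,\overline{B})$), so the supremum over $A$ coincides with the supremum over $\overline{A}$, and the same holds with the roles of $A$ and $B$ exchanged.

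Second, I would invoke Proposition (\ref{sup}) applied to the families $\bigl([u]^{\alpha}\bigr)_{\alpha\in(0,1]}$ and $\bigl([v]^{\alpha}\bigr)_{\alpha\in(0,1]}$, which yields
\[
h\Bigl(\bigcup_{\alpha\in(0,1]}[u]^{\alpha},\bigcup_{\alpha\in(0,1]}[v]^{\alpha}\Bigr)\leq \sup_{\alpha\in(0,1]}h\bigl([u]^{\alpha},[v]^{\alpha}\bigr).
\]
Chaining the two steps gives
\[
h\bigl([u]^{0},[v]^{0}\bigr)=h\Bigl(\bigcup_{\alpha>0}[u]^{\alpha},\bigcup_{\alpha>0}[v]^{\alpha}\Bigr)\leq \sup_{\alpha\in(0,1]}h\bigl([u]^{\alpha},[v]^{\alpha}\bigr),
\]
and combining this with the trivial inequality $\sup_{\alpha\in(0,1]}\leq \sup_{\alpha\in[0,1]}$ yields the desired equality.

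There is no real obstacle here: the only subtlety is verifying that $h$ genuinely descends through closures on $P_{b}(X)$ (not just on $P_{cl,b}(X)$), which is why we work with the semimetric in the first place. Once that tiny observation is in place, the proof is a two-line assembly and, importantly, requires neither the normality of $u,v$ nor upper semicontinuity, so it works on all of $\mathcal{F}_{X}^{\ast\ast}$ as stated and hence provides a uniform, streamlined replacement for the argument in \cite{Str}.
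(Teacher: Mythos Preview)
Your proof is correct and follows essentially the same route as the paper: both reduce to showing $h([u]^{0},[v]^{0})\leq\sup_{\alpha\in(0,1]}h([u]^{\alpha},[v]^{\alpha})$ by writing $[u]^{0}$ as the closure of the union of positive $\alpha$-cuts, passing from closures to the underlying unions via $h(\overline{A},\overline{B})=h(A,B)$, and then applying Proposition~\ref{sup} (i.e., relation~(\ref{sup})). The only difference is cosmetic: you spell out the closure-invariance of $h$ explicitly, whereas the paper uses it silently in one equality.
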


\begin{proof}
Let $u,v\in \mathcal{F}_{X}^{\ast \ast }$. Using the definiton of $d_{\infty
}$, we obtain
\begin{equation*}
d_{\infty }\left( u,v\right) \geq \sup_{\alpha \in (0,1]}h\left( \left[ u%
\right] ^{\alpha },\left[ v\right] ^{\alpha }\right) \text{.}
\end{equation*}

From the definition of $\left[ u\right] ^{0}$ we deduce
\begin{equation*}
\left[ u\right] ^{0}=\overline{\cup _{\alpha \in (0,1]}[u]^{\alpha }}\text{.}
\end{equation*}
\

Thus,%
\begin{equation*}
h\left( \left[ u\right] ^{0},\left[ v\right] ^{0}\right) =h\left( \overline{%
\cup _{\alpha \in (0,1]}[u]^{\alpha }},\overline{\cup _{\alpha \in
(0,1]}[v]^{\alpha }}\right)
\end{equation*}%
\begin{equation*}
=h\left( \cup _{\alpha \in (0,1]}[u]^{\alpha },\cup _{\alpha \in
(0,1]}[v]^{\alpha }\right) \overset{(\ref{sup})}{\leq }\sup_{\alpha \in
(0,1]}h\left( \left[ u\right] ^{\alpha },\left[ v\right] ^{\alpha }\right)
\text{.}
\end{equation*}%
\ \

It results
\begin{equation*}
d_{\infty }\left( u,v\right) =\sup_{\alpha \in \left[ 0,1\right] }h\left( %
\left[ u\right] ^{\alpha },\left[ v\right] ^{\alpha }\right) \leq
\sup_{\alpha \in (0,1]}h\left( \left[ u\right] ^{\alpha },\left[ v\right]
^{\alpha }\right)
\end{equation*}%
and from the both inequalities we obtain the conclusion.
\end{proof}

\bigskip

\textbf{Fuzzy iterated function systems }

\bigskip

Given a metric space $\left( X,d\right) $ and $\left( f_{i}\right) _{i\in I}$
a family of functions with $f_{i}:X\rightarrow
\mathbb{R}
$ for all $i\in I$, we denote by $\vee _{i\in I}f_{i}:X\rightarrow
\mathbb{R}
$ the function given by
\begin{equation*}
\left( \vee _{i\in I}f_{i}\right) \left( x\right) =\sup_{i\in I}f_{i}\left(
x\right)
\end{equation*}%
for every $x\in X$.

\begin{definition}
A grey level map is a nonzero function $\rho :\left[ 0,1\right] \rightarrow %
\left[ 0,1\right] $. We say that a nonzero map satisfies the non-decreasing
right continuous (ndrc) condition or it is a ndrc map if $\rho $ is
increasing and right continuous.
\end{definition}

\begin{definition}
Let $u\in \mathcal{F}_{X}^{\ast \ast }$ and $\rho :\left[ 0,1\right]
\rightarrow \left[ 0,1\right] $. The function $\rho \left( u\right)
:X\rightarrow \left[ 0,1\right] $ is given by $\rho \left( u\right) \left(
x\right) =\rho \left( u\left( x\right) \right) $ for all $x\in X$.
\end{definition}

\bigskip

From Proposition \ref{T(u)}, as in \cite{Str}, one can obtain:

\begin{proposition}
\lbrack see \cite{Str}] If $\rho :\left[ 0,1\right] \rightarrow \left[ 0,1%
\right] $ is a ndrc map, then for every $u\in \mathcal{F}_{X}^{\ast \ast }$,
the fuzzy set $\rho \left( u\right) \in \mathcal{F}_{X}^{\ast \ast }$. If $%
u\in \mathcal{F}_{X}^{\ast }$, then $\rho \left( u\right) \in \mathcal{F}%
_{X}^{\ast }$.
\end{proposition}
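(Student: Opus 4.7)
The plan is to reduce every required property of $\rho(u)$ to the corresponding property of $u$ through a single identification of the $\alpha$-cuts of $\rho(u)$ with $\alpha$-cuts of $u$, using the generalized right-inverse of $\rho$.

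First, for each $\alpha \in (0,1]$ I would set
\[
 t_\alpha := \inf\{t \in [0,1] : \rho(t) \geq \alpha\},
\]
and observe that this infimum is attained: taking $s_n \downarrow t_\alpha$ with $\rho(s_n) \geq \alpha$ and using right continuity of $\rho$ gives $\rho(t_\alpha) \geq \alpha$, so $t_\alpha \in \{t : \rho(t) \geq \alpha\}$. Monotonicity then yields $\{t \in [0,1] : \rho(t) \geq \alpha\} = [t_\alpha,1]$, and applying this pointwise produces the key identity
\[
 [\rho(u)]^\alpha = \{x \in X : u(x) \geq t_\alpha\} = [u]^{t_\alpha}, \qquad \alpha \in (0,1],
\]
which is the engine that drives the entire proof.

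Second, I would verify that $\rho(u) \in \mathcal{F}_X^{\ast\ast}$. Using the normalizations $\rho(0)=0$ and $\rho(1)=1$ built into the ndrc convention (they are needed anyway for the conclusion to hold, since otherwise normality already fails), normality of $\rho(u)$ is immediate: if $u(x_0)=1$, then $\rho(u)(x_0)=\rho(1)=1$. For compact support, $\rho(0)=0$ together with monotonicity gives $\{x : \rho(u(x)) > 0\} \subseteq \{x : u(x) > 0\}$, hence $[\rho(u)]^0 \subseteq [u]^0$; since $[\rho(u)]^0$ is closed by definition and $[u]^0$ is compact, $[\rho(u)]^0$ is compact as a closed subset of a compact set. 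This yields the first assertion.

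Third, assuming in addition that $u \in \mathcal{F}_X^{\ast}$ is usc, I would prove $\rho(u)$ is usc by showing $[\rho(u)]^\alpha$ is closed for every $\alpha \in \mathbb{R}$: trivially $X$ for $\alpha \leq 0$ and empty for $\alpha > 1$, while for $\alpha \in (0,1]$ the identity above reduces it to $[u]^{t_\alpha}$, which is closed because $u$ is usc. This gives $\rho(u) \in \mathcal{F}_X^{\ast}$. The only delicate technical point -- and the main obstacle -- is that $t_\alpha$ must be a \emph{minimum}, not merely an infimum; this is precisely what the right continuity of $\rho$ buys us, and without it the identity in the first step would degrade to a proper inclusion, breaking both the compact-support and the upper-semicontinuity arguments.
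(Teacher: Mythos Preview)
Your argument is correct, and the paper does not actually supply a proof of this proposition: it simply cites \cite{Str} and points to Proposition~\ref{T(u)}, so there is no in-paper proof to compare against. That said, your key device --- defining $t_\alpha = \inf\{t:\rho(t)\ge\alpha\}$, using right continuity to show the infimum is attained, and concluding $[\rho(u)]^\alpha = [u]^{t_\alpha}$ --- is exactly the mechanism the paper deploys later, in the justification of Claim~\ref{cl1} inside the proof of Theorem~\ref{thm} (there written $\beta_{\alpha,i}=\inf\{\gamma:\rho_i(\gamma)\ge\alpha\}$ with the identical right-continuity step). So your approach is fully aligned with the paper's toolkit.

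One point worth flagging: your parenthetical about the normalizations $\rho(0)=0$ and $\rho(1)=1$ being ``built into the ndrc convention'' is not quite accurate for this paper. Its definition of an ndrc map requires only that $\rho$ be nonzero, increasing, and right continuous; the conditions $\rho_i(0)=0$ and $\rho_{i_0}(1)=1$ appear only in the separate definition of an \emph{admissible system}. You are right that they are needed for the conclusion (without $\rho(1)=1$ normality fails, without $\rho(0)=0$ compact support fails on non-compact $X$), so the proposition as stated is slightly imprecise --- but this is a gap in the paper's formulation, not in your proof.
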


\begin{lemma}
\label{lema2}[see \cite{Str}] Let $J$ be a finite set and let $\left(
u_{j}\right) _{j\in J}$ be a family of usc functions, with $%
u_{j}:X\rightarrow \left[ 0,1\right] $ for all $j\in J$. Then $\vee _{j\in
J}u_{j}$ is usc.
\end{lemma}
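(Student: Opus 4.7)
The plan is to apply directly the closed-superlevel-set characterization of upper semicontinuity stated earlier in the preliminaries: a function $v : X \to \mathbb{R}$ is usc iff $v^{-1}([\alpha,\infty))$ is closed for every $\alpha \in \mathbb{R}$. Setting $v := \vee_{j\in J} u_j$, it suffices to prove that $v^{-1}([\alpha,\infty))$ is closed for each real $\alpha$.

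The key step is to establish the set identity
\begin{equation*}
v^{-1}([\alpha,\infty)) = \bigcup_{j\in J} u_j^{-1}([\alpha,\infty)).
\end{equation*}
The inclusion $\supseteq$ is immediate from the definition of the pointwise supremum. For the reverse inclusion, I would use the finiteness of $J$: because $J$ is finite the supremum $\sup_{j\in J} u_j(x)$ is actually a maximum, so if $v(x) \geq \alpha$ there exists some $j_0 \in J$ with $u_{j_0}(x) \geq \alpha$, which puts $x$ in the union on the right. Once this identity is in hand, each set $u_j^{-1}([\alpha,\infty))$ is closed by the usc hypothesis on $u_j$, and a finite union of closed sets is closed, so $v^{-1}([\alpha,\infty))$ is closed as required.

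The only genuine subtlety, and the only place the hypothesis on $J$ is used, is in passing from ``the supremum is $\geq \alpha$'' to ``some individual $u_{j_0}$ is $\geq \alpha$''; for an infinite family this can fail (the supremum can be an unattained limit), which is why finiteness of $J$ is essential both for the set identity and for the subsequent union to remain closed. The bound $[0,1]$ on the range of each $u_j$ plays no role in the argument; the same proof works for any $\mathbb{R}$-valued usc functions. Since everything reduces to two elementary steps, I would write the proof in just a few lines, with no real obstacle to overcome beyond noticing where finiteness enters.
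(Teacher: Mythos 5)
Your proof is correct. The paper gives no proof of this lemma --- it is stated with a citation to \cite{Str} --- so there is nothing to compare against; your argument (superlevel set of the pointwise maximum equals the finite union of the individual superlevel sets, each closed by hypothesis, and a finite union of closed sets is closed) is the standard one, and you correctly identify that finiteness of $J$ is exactly what makes the set identity and the closedness of the union go through, consistent with the paper's own remark that the statement fails for infinite families.
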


\begin{remark}
Let us note that the above mentioned result is not true, in general, if the
set $J$ is infinite. For this reason, besides $\mathcal{F}_{X}^{\ast }$, we
had to use the set $\mathcal{F}_{X}^{\ast \ast }$ (see, for example, Lemmas %
\ref{lema3}, \ref{lema3bis}, \ref{distinf}, \ref{lemadiam}), and in
particular cases, we had to prove that a function which is given as $\vee
_{j\in J}u_{j}$ is usc, where the functions $u_{j}$, $j\in J$ are usc (see
the proofs of the Theorem \ref{thm}).
\end{remark}

\begin{definition}
A system of grey level maps $\left( \rho _{i}\right) _{i\in I}$ is
admissible if it satisfies the following conditions:

a) $\rho _{i}$ is a ndrc map, for all $i\in I$;

b) $\rho _{i}\left( 0\right) =0$ for all $i\in I$;

c) $\rho _{i}\left( 1\right) =1$ for some $i\in I$.
\end{definition}

\begin{definition}
\label{dfuzzy}Let $\mathcal{S}=\left( \left( X,d\right) ,\left( f_{i}\right)
_{i\in I}\right) $ be an orbital contractive iterated function system and
let $\left( \rho _{i}\right) _{i\in I}$ be an admissible system of grey
level maps.

The system $\mathcal{S}_{Z}=\left( \left( X,d\right) ,\left( f_{i}\right)
_{i\in I},\left( \rho _{i}\right) _{i\in I}\right) $ is called an orbital
fuzzy iterated function system. We define the fuzzy Hutchinson Barnsley
operator associated to $\mathcal{S}_{Z}$ by%
\begin{equation*}
Z:\mathcal{F}_{X}^{\ast \ast }\rightarrow \mathcal{F}_{X}^{\ast \ast }
\end{equation*}%
\begin{equation*}
Z\left( u\right) :=\vee _{i\in I}\rho _{i}\left( f_{i}\left( u\right) \right)
\end{equation*}%
for all $u\in $ $\mathcal{F}_{X}^{\ast \ast }$.
\end{definition}

\begin{proposition}
\lbrack see \cite{Str}]\label{PropZ(u)} Let $\mathcal{S}_{Z}=\left( \left(
X,d\right) ,\left( f_{i}\right) _{i\in I},\left( \rho _{i}\right) _{i\in
I}\right) $ be an orbital fuzzy iterated function system. Then $Z\left(
u\right) \in \mathcal{F}_{X}^{\ast }$ for all $u\in \mathcal{F}_{X}^{\ast }$
and $Z\left( u\right) \in \mathcal{F}_{X}^{\ast \ast }$ for all $u\in
\mathcal{F}_{X}^{\ast \ast }$.
\end{proposition}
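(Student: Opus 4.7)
The plan is to verify directly that $Z(u)$ satisfies the defining properties (normality, compact support, and where appropriate upper semicontinuity) by exploiting the tower structure $Z(u)=\vee_{i\in I}\rho_i(f_i(u))$: first a Zadeh push-forward, then a grey level composition, then a finite pointwise supremum. Each of these three operations has a known effect on membership in $\mathcal{F}_X^{\ast\ast}$ or $\mathcal{F}_X^{\ast}$ via Proposition \ref{T(u)} and the preceding proposition on ndrc maps, so the proof should be essentially bookkeeping.

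For normality, I would use admissibility condition (c) to choose $i_0\in I$ with $\rho_{i_0}(1)=1$, and pick $x\in X$ realising $u(x)=1$. Zadeh's definition gives $f_{i_0}(u)(f_{i_0}(x))\geq u(x)=1$, hence $\rho_{i_0}(f_{i_0}(u))(f_{i_0}(x))=\rho_{i_0}(1)=1$, and the supremum defining $Z(u)$ forces $Z(u)(f_{i_0}(x))=1$. This step works uniformly for $u\in\mathcal{F}_X^{\ast\ast}$ or $u\in\mathcal{F}_X^{\ast}$.

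For compact support, Proposition \ref{T(u)}(b) gives that each $f_i(u)$ is compactly supported, and the preceding proposition on ndrc maps lifts this to each $\rho_i(f_i(u))$, so every $[\rho_i(f_i(u))]^0$ is compact. Since positivity of $Z(u)(x)$ is witnessed by positivity of at least one $\rho_i(f_i(u))(x)$, we have $\{x\in X\mid Z(u)(x)>0\}=\cup_{i\in I}\{x\in X\mid \rho_i(f_i(u))(x)>0\}$; because $I$ is finite, taking closures commutes with this union, yielding $[Z(u)]^0=\cup_{i\in I}[\rho_i(f_i(u))]^0$, a finite union of compact sets and hence compact.

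For the refined claim that $Z(u)\in\mathcal{F}_X^{\ast}$ when $u\in\mathcal{F}_X^{\ast}$, Proposition \ref{T(u)}(c) yields that each $f_i(u)$ is usc, the ndrc proposition preserves this under $\rho_i$, and Lemma \ref{lema2} (whose finite-index hypothesis is crucial and matches our finite $I$) concludes that $\vee_{i\in I}\rho_i(f_i(u))=Z(u)$ is usc. The only conceptual obstacle is the compact support step, since closure does not in general distribute across arbitrary unions; what rescues the argument, both here and in the application of Lemma \ref{lema2}, is precisely the finiteness of $I$ built into the definition of an orbital fuzzy iterated function system.
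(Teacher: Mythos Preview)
The paper does not prove this proposition itself; it simply cites \cite{Str} (Oliveira--Strobin). Your argument is correct and is exactly the standard decomposition one finds in that reference: treat $Z(u)$ as Zadeh push-forward, then grey-level composition, then finite pointwise supremum, and track normality, compact support, and upper semicontinuity through each stage using Proposition~\ref{T(u)}, the ndrc proposition, and Lemma~\ref{lema2}.

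One small caveat worth tightening: when you invoke the ndrc proposition to place each $\rho_i(f_i(u))$ in $\mathcal{F}_X^{\ast\ast}$ (or $\mathcal{F}_X^{\ast}$), note that for those $i$ with $\rho_i(1)<1$ the function $\rho_i(f_i(u))$ is \emph{not} normal, so strictly speaking it is not in $\mathcal{F}_X^{\ast\ast}$. This does not damage your proof, because you only actually use the compact-support and usc conclusions of that proposition, and those hold regardless: admissibility condition (b), $\rho_i(0)=0$, gives $\operatorname{supp}\rho_i(f_i(u))\subset\operatorname{supp}f_i(u)$, and the ndrc property of $\rho_i$ preserves upper semicontinuity since $[\rho_i(v)]^\alpha=[v]^{\beta_{\alpha,i}}$ for the appropriate $\beta_{\alpha,i}$. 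You might phrase those two steps directly rather than citing the $\mathcal{F}_X^{\ast\ast}$-membership proposition.
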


\begin{remark}
Let $\mathcal{S}_{Z}=\left( \left( X,d\right) ,\left( f_{i}\right) _{i\in
I},\left( \rho _{i}\right) _{i\in I}\right) $ be an orbital fuzzy iterated
function system. If we suppose that the function $\rho _{i}:[0,1]\rightarrow
\lbrack 0,1]$ is continuous for every $i\in I$, then we obtain simple
justifications for Lemma \ref{Z} and Theorem \ref{thm}.
\end{remark}

\section{Main results}

Let $\mathcal{S}_{Z}=\left( \left( X,d\right) ,\left( f_{i}\right) _{i\in
I},\left( \rho _{i}\right) _{i\in I}\right) $ be an orbital fuzzy iterated
function system. We consider $\mathcal{F}_{\mathcal{S}}^{\ast \ast }=\{u\in
\mathcal{F}_{X}^{\ast \ast }$ $|$ if there exists $x\in X$ with the property
$u\left( x\right) >0$, then there exist $w\in X$ and $y\in \overline{%
\mathcal{O}\left( w\right) }$ such that $x\in \overline{\mathcal{O}\left(
w\right) }$ and $u\left( y\right) =1\}$ and $\mathcal{F}_{\mathcal{S}}^{\ast
}=\{u\in \mathcal{F}_{S}^{\ast \ast }$ $|$ $u$ is usc$\}$. The following
Lemma states that $\mathcal{F}_{\mathcal{S}}^{\ast }$ is a closed subset of $%
\mathcal{F}_{X}^{\ast }.$

\begin{lemma}
\label{Fs_closed}Let $\mathcal{S}_{Z}=\left( \left( X,d\right) ,\left(
f_{i}\right) _{i\in I},\left( \rho _{i}\right) _{i\in I}\right) $ be an
orbital fuzzy iterated function system, with $I$ an nonempty finite set.
Then $\left( \mathcal{F}_{\mathcal{S}}^{\ast },d_{\infty }\right) $ is a
closed subset of $\left( \mathcal{F}_{X}^{\ast },d_{\infty }\right) $.
\end{lemma}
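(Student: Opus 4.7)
The plan is to prove closedness sequentially. Suppose $(u_n)_n \subseteq \mathcal{F}_{\mathcal{S}}^{\ast}$ converges in $d_\infty$ to some $u$; by Theorem \ref{complete}, $(\mathcal{F}_X^{\ast}, d_\infty)$ is complete, so $u$ automatically lies in $\mathcal{F}_X^{\ast}$, i.e.\ it is normal, compactly supported and usc. It therefore only remains to verify the orbital condition in the definition of $\mathcal{F}_{\mathcal{S}}^{\ast}$: given any $x \in X$ with $u(x) > 0$, to produce $w \in X$ and $y \in \overline{\mathcal{O}(w)}$ with $x \in \overline{\mathcal{O}(w)}$ and $u(y) = 1$.

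First I would extract the relevant witness sequences. Fix $x$ with $\alpha := u(x) > 0$. By Lemma \ref{ldinf}, $d_\infty(u_n, u) \to 0$ entails $h([u_n]^{\alpha}, [u]^{\alpha}) \to 0$, and since the $\alpha$-cuts involved are nonempty compact I can pick $x_n \in [u_n]^{\alpha}$ with $x_n \to x$. Then $u_n(x_n) \geq \alpha > 0$, so the orbital property of $u_n$ provides $w_n \in X$ and $y_n \in \overline{\mathcal{O}(w_n)}$ with $x_n \in \overline{\mathcal{O}(w_n)}$ and $u_n(y_n) = 1$. For $(y_n)$: since $h([u_n]^1, [u]^1) \to 0$ with $[u]^1$ nonempty compact, Proposition \ref{reun_comp} yields that $[u]^1 \cup \bigcup_n [u_n]^1$ is compact; a subsequence gives $y_n \to y$, and comparing each $y_n$ to a nearest point in the closed set $[u]^1$ forces $y \in [u]^1$, so $u(y) = 1$.

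If along a further subsequence $w_n \to w$, two applications of Lemma \ref{convpeorb} (with base sequence $w_n \to w$ and orbit sequences $x_n \to x$ and $y_n \to y$ respectively) will immediately give $x, y \in \overline{\mathcal{O}(w)}$, closing the proof. The main obstacle is precisely securing a convergent subsequence of $(w_n)$: whereas $x_n, y_n$ are automatically trapped inside the compact set $[u]^0 \cup \bigcup_n [u_n]^0$ (again Proposition \ref{reun_comp}), nothing in the definition of $\mathcal{F}_{\mathcal{S}}^{\ast}$ constrains the ``seed'' $w_n$ to a compact region of $X$.

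I expect to overcome this via a case decomposition in the spirit of the proof of Lemma \ref{convpeorb}. When the depths of $x_n, y_n$ in $\mathcal{O}(w_n)$ stay bounded, the finiteness of $I$ lets me stabilize the labeling words along a subsequence and replace $w_n$ by a ``deepest common ancestor'' of $x_n, y_n$ inside $\mathcal{O}(w_n)$, bringing the seed back into a compact set controlled by $x_n, y_n$; when instead $x_n$ or $y_n$ effectively sits on $A_{w_n}$, continuity of the attractor map (Proposition \ref{Ak}) together with Lemma \ref{convpeorb} should supply a suitable $w$, while the mixed case follows by combining both ingredients. Piecing these together delivers the required $w$ and completes the verification.
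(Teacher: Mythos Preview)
Your overall strategy is exactly the paper's: pick $x_n\in[u_n]^{\alpha}$ with $x_n\to x$, invoke the orbital property of each $u_n$ to obtain $w_n,y_n$, extract a convergent subsequence $y_n\to y$ inside the compact set $K=\operatorname{supp}u\cup\bigcup_n\operatorname{supp}u_n$, check $u(y)=1$, and finish with two applications of Lemma~\ref{convpeorb} once a limit for $(w_n)$ is available. The paper, however, does not run any case analysis on $w_n$; it simply \emph{asserts} that the witness $w_n$ may be chosen in $K$ and then passes to a convergent subsequence by compactness of $K$. You are correct that the bare definition of $\mathcal{F}_{\mathcal{S}}^{\ast\ast}$ only supplies $w_n\in X$, so the step you flag as the main obstacle is precisely the one the paper dispatches in a single clause.

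That said, the repair you sketch does not close the gap. In the bounded--depth case, after stabilising to $x_n=f_{\omega}(w_n)$ and $y_n=f_{\gamma}(w_n)$, the ``deepest common ancestor'' in $\mathcal{O}(w_n)$ is $f_{\sigma}(w_n)$ with $\sigma$ the longest common \emph{suffix} of $\omega$ and $\gamma$; this coincides with $x_n$ or $y_n$ only when one word is a suffix of the other. In the generic sub-case where, after stripping $\sigma$, the residual words end in distinct letters, the new seed $f_{\sigma}(w_n)$ is still an uncontrolled image of $w_n$ and there is no reason it lies in any compact set determined by $x_n,y_n$ (the maps $f_i$ are merely continuous, not invertible). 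The attractor case has the same defect: both Proposition~\ref{Ak} and Lemma~\ref{convpeorb} require a convergent base sequence as input, which is precisely what is missing for $(w_n)$, so invoking them here is circular. Thus the obstacle you correctly isolated is not actually overcome by your decomposition; to match the paper you would need either an argument that $w_n$ can always be chosen in $\operatorname{supp}u_n$, or a genuinely different route to produce the limiting $w$.
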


\begin{proof}
Let $\left( u_{n}\right) _{n}$ be a sequence of elements from $\mathcal{F}_{%
\mathcal{S}}^{\ast }$ and $u\in \mathcal{F}_{X}^{\ast }$ such that $%
\lim_{n\rightarrow \infty }d_{\infty }\left( u_{n},u\right) =0$. As supp$u$
and supp$u_{n}$ are compact sets for all $n\in
\mathbb{N}
$ and $\lim_{n\rightarrow \infty }h\left( \text{supp}u_{n}\text{, supp}%
u\right) =0$, we deduce $K:=\left( \cup _{n\in
\mathbb{N}
}\text{supp}u_{n}\right) \cup $ supp$u$ is a compact set.

Let us consider $x\in X$ such that $u\left( x\right) >0$. Thus, $x\in K$.
Let $\alpha =u\left( x\right) $. As $\lim_{n\rightarrow \infty }d_{\infty
}\left( u_{n},u\right) =0$, we deduce $\lim_{n\rightarrow \infty }h\left( %
\left[ u_{n}\right] ^{\alpha },\left[ u\right] ^{\alpha }\right) =0$. Hence,
there exists a sequence $\left( x_{n}\right) _{n}$ such that $x_{n}\in \left[
u_{n}\right] ^{\alpha }\subset $supp$u_{n}$ for every $n\in
\mathbb{N}
$ and $\lim_{n\rightarrow \infty }d\left( x_{n},x\right) =0$.

As $u_{n}\in \mathcal{F}_{\mathcal{S}}^{\ast }$ for all $n\in
\mathbb{N}
$, we have that there exist $w_{n}\in K$ and $y_{n}\in \overline{\mathcal{O}%
\left( w_{n}\right) }$ such that $x_{n}\in \overline{\mathcal{O}\left(
w_{n}\right) }$ and $u_{n}\left( y_{n}\right) =1$. We obtain a sequence $%
\left( w_{n}\right) _{n}\subset K$ and by passing to a subsequence (which
will still be denoted by $\left( w_{n}\right) _{n}$) we deduce there exists $%
w\in K$ such that $\lim_{n\rightarrow \infty }d\left( w_{n},w\right) =0$. As
$y_{n}\in \overline{\mathcal{O}\left( w_{n}\right) }$ $\subset K$ for all $%
n\in
\mathbb{N}
,$ by passing to a subsequence (which will still be denoted by $\left(
y_{n}\right) _{n}$) we deduce there exists $y\in K$ such that $%
\lim_{n\rightarrow \infty }d\left( y_{n},y\right) =0$.

Therefore, we obtained the sequences $\left( w_{n}\right) _{n},\left(
y_{n}\right) _{n}\subset K$ such that $\left( w_{n}\right) _{n}$ is
convergent to $w\in K$ and $\left( y_{n}\right) _{n}$ is convergent to $y\in
K$. As $y_{n}\in \overline{\mathcal{O}\left( w_{n}\right) }$ for every $n\in
\mathbb{N}
$, by applying Lemma \ref{convpeorb}, we deduce $y\in \overline{\mathcal{O}%
\left( w\right) }$. We have $y_{n}\in \left[ u_{n}\right] ^{1}$ for all $%
n\in
\mathbb{N}
$; $\lim_{n\rightarrow \infty }h\left( \left[ u_{n}\right] ^{1},\left[ u%
\right] ^{1}\right) =0$ and $\lim_{n\rightarrow \infty }d\left(
y_{n},y\right) =0$. Hence, $y\in \left[ u\right] ^{1}$, so $u\left( y\right)
=1$.

In conclusion, we proved that $u\in \mathcal{F}_{\mathcal{S}}^{\ast }$.
\end{proof}

\begin{remark}
If $\left( X,d\right) $ is a complete metric space, then $\left( \mathcal{F}%
_{\mathcal{S}}^{\ast },d_{\infty }\right) $ is complete.
\end{remark}

\begin{lemma}
\label{lema3}Let $\mathcal{S}_{Z}=\left( \left( X,d\right) ,\left(
f_{i}\right) _{i\in I},\left( \rho _{i}\right) _{i\in I}\right) $ be an
orbital fuzzy iterated function system. Let $\left( u_{j}\right) _{j\in J}$
be a family (finite or infinite) of functions from $\mathcal{F}_{X}^{\ast
\ast }$ having the property that there exists a set $K\in P_{cp}\left(
X\right) $ such that supp$u_{j}\subset K$ for all $j\in J$. Let $%
f:X\rightarrow X$ be a continuous function. Then

1) $u:=\vee _{j\in J}u_{j}\in \mathcal{F}_{X}^{\ast \ast }$;

2) $f\left( \vee _{j\in J}u_{j}\right) =\vee _{j\in J}f\left( u_{j}\right) $;

3) if $u_{j}\in \mathcal{F}_{\mathcal{S}}^{\ast \ast }$ for all $j\in J$,
then $u\in \mathcal{F}_{\mathcal{S}}^{\ast \ast }$.
\end{lemma}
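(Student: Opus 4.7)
The plan is to verify the three parts in order; each is essentially direct, and the role of the common bound $K$ on the supports is what makes part 1) go through in the infinite-$J$ case.

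For part 1), I need to check that $u=\vee_{j\in J}u_{j}$ is normal and compactly supported. Normality is immediate: pick any $j_{0}\in J$; since $u_{j_{0}}$ is normal there exists $x$ with $u_{j_{0}}(x)=1$, hence $u(x)\ge u_{j_{0}}(x)=1$, and since $u$ takes values in $[0,1]$ we conclude $u(x)=1$. For compact support, I would argue that $\{x\in X\mid u(x)>0\}=\cup_{j\in J}\{x\in X\mid u_{j}(x)>0\}\subset\cup_{j\in J}\mathrm{supp}(u_{j})\subset K$; since $K$ is compact (hence closed), the closure $\mathrm{supp}(u)=\overline{\{x\in X\mid u(x)>0\}}$ sits inside $K$, and a closed subset of a compact set is compact.

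Part 2) reduces to an interchange of suprema together with the case split built into Zadeh's Extension Principle. For $y\notin f(X)$, both $f(u)(y)$ and $\bigl(\vee_{j\in J}f(u_{j})\bigr)(y)=\sup_{j\in J}f(u_{j})(y)$ vanish by definition. For $y\in f(X)$,
\[
f(u)(y)=\sup_{f(x)=y}\sup_{j\in J}u_{j}(x)=\sup_{j\in J}\sup_{f(x)=y}u_{j}(x)=\sup_{j\in J}f(u_{j})(y),
\]
so the two functions agree pointwise.

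For part 3), assume in addition that each $u_{j}\in\mathcal{F}_{\mathcal{S}}^{\ast\ast}$ and let $x\in X$ satisfy $u(x)>0$. By definition of supremum there is $j_{0}\in J$ with $u_{j_{0}}(x)>0$; applying the defining property of $u_{j_{0}}$, I obtain $w\in X$ and $y\in\overline{\mathcal{O}(w)}$ with $x\in\overline{\mathcal{O}(w)}$ and $u_{j_{0}}(y)=1$. The same $w$ and $y$ work for $u$: the inequalities $u(y)\ge u_{j_{0}}(y)=1$ and $u\le 1$ force $u(y)=1$, so $u\in\mathcal{F}_{\mathcal{S}}^{\ast\ast}$.

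There is no serious obstacle here; the only point needing care is the compactness of $\mathrm{supp}(u)$ in part 1) when $J$ is infinite, where the uniform majorant $K$ is doing real work (an arbitrary union of compact sets need not have compact closure). Parts 2) and 3) reduce to standard properties of $\sup$ once the appropriate element of the family is singled out.
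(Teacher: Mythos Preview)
Your proof is correct and follows essentially the same approach as the paper's own proof: normality via any single $u_{j_0}$, compact support via the common bound $K$, the interchange of suprema in part 2), and in part 3) the selection of some $j_0$ with $u_{j_0}(x)>0$ to produce the required $w$ and $y$. If anything, you are slightly more explicit in part 1) about why $\mathrm{supp}(u)\subset K$ (closed subset of a compact set), which the paper leaves implicit.
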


\begin{proof}
1) We note that supp$u\subset K\in P_{cp}\left( X\right) $. Thus, supp$u$ is
compact. Let $j_{0}\in J$ be fixed. Since $u_{j_{0}}\in \mathcal{F}%
_{X}^{\ast \ast }$, it results that there exists $x_{0}\in X$ such that $%
u_{j_{0}}\left( x_{0}\right) =1$. Hence,
\begin{equation*}
1\geq u\left( x_{0}\right) =\left( \vee _{j\in J}u_{j}\right) \left(
x_{0}\right) \geq u_{j_{0}}\left( x_{0}\right) =1
\end{equation*}%
and we deduce $u\in \mathcal{F}_{X}^{\ast \ast }$.

2) If $y\notin f\left( X\right) ,$ then
\begin{equation*}
f\left( \vee _{j\in J}u_{j}\right) \left( y\right) =0
\end{equation*}%
and%
\begin{equation*}
\left( \vee _{j\in J}f\left( u_{j}\right) \right) \left( y\right)
=\sup_{j\in J}f\left( u_{j}\right) \left( y\right) =0\text{.}
\end{equation*}

Hence, the relation is proved.

Let us consider $y\in f\left( X\right) $. We have%
\begin{equation*}
f\left( \vee _{j\in J}u_{j}\right) \left( y\right) =\sup_{f\left( x\right)
=y}\left( \vee _{j\in J}u_{j}\right) \left( x\right) =\sup_{f\left( x\right)
=y}\left( \sup_{j\in J}u_{j}\left( x\right) \right)
\end{equation*}%
\begin{equation*}
=\sup_{j\in J}\sup_{f\left( x\right) =y}u_{j}\left( x\right) =\sup_{j\in
J}f\left( u_{j}\right) \left( y\right) =\left( \vee _{j\in J}f\left(
u_{j}\right) \right) \left( y\right) \text{.}
\end{equation*}

Therefore, $f\left( \vee _{j\in J}u_{j}\right) \left( y\right) =\left( \vee
_{j\in J}f\left( u_{j}\right) \right) \left( y\right) $ for every $y\in
f\left( X\right) $ and we deduce the conclusion.

3) Let us consider $x\in X$ such that $u\left( x\right) >0$. Then, there
exists $j_{0}\in J$ such that $u_{j_{0}}\left( x\right) >0$. As $%
u_{j_{0}}\in \mathcal{F}_{\mathcal{S}}^{\ast \ast }$, there exist $w,y\in X$
such that $x,y\in \overline{\mathcal{O}\left( w\right) }$ and $%
u_{j_{0}}\left( y\right) =1$. We have $1=u_{j_{0}}\left( y\right) \leq \vee
_{j\in J}u_{j}\left( y\right) =u\left( y\right) \leq 1$, hence $u\left(
y\right) =1$.

Using the above conclusions, we deduce that $u\in \mathcal{F}_{\mathcal{S}%
}^{\ast \ast }$.
\end{proof}

\begin{lemma}
\label{lema3bis}Let $\mathcal{S}_{Z}=\left( \left( X,d\right) ,\left(
f_{i}\right) _{i\in I},\left( \rho _{i}\right) _{i\in I}\right) $ be an
orbital fuzzy iterated function system. If $u\in \mathcal{F}_{\mathcal{S}%
}^{\ast \ast }$, then $Z\left( u\right) \in \mathcal{F}_{\mathcal{S}}^{\ast
\ast }$. In particular, if $u\in \mathcal{F}_{\mathcal{S}}^{\ast }$, then $%
Z\left( u\right) \in \mathcal{F}_{\mathcal{S}}^{\ast }$.
\end{lemma}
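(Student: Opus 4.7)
The plan is to reduce the problem to Proposition \ref{PropZ(u)} for the ``normal and compactly supported'' and (for the in particular part) the usc properties, so that the only substantive work is to check the orbital condition built into the definition of $\mathcal{F}_{\mathcal{S}}^{\ast\ast}$. Concretely, if $u\in\mathcal{F}_{X}^{\ast\ast}$ then Proposition \ref{PropZ(u)} already yields $Z(u)\in\mathcal{F}_{X}^{\ast\ast}$, and if moreover $u$ is usc then $Z(u)\in\mathcal{F}_{X}^{\ast}$ (so usc as well). Thus after verifying the orbital membership condition for $Z(u)$, the ``in particular'' clause is immediate.

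For the orbital condition, I would start from a point $x\in X$ with $Z(u)(x)>0$. Since $Z(u)(x)=\sup_{i\in I}\rho_i(f_i(u)(x))$, there is some $i_0\in I$ with $\rho_{i_0}(f_{i_0}(u)(x))>0$. Because $\rho_{i_0}$ is a ndrc map with $\rho_{i_0}(0)=0$, this forces $f_{i_0}(u)(x)>0$, hence by Zadeh's principle there exists $z\in X$ with $f_{i_0}(z)=x$ and $u(z)>0$. Since $u\in\mathcal{F}_{\mathcal{S}}^{\ast\ast}$, there exist $w\in X$ and $y'\in\overline{\mathcal{O}(w)}$ with $z\in\overline{\mathcal{O}(w)}$ and $u(y')=1$.

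The key observation I would isolate as a sub-step is the invariance property $f_i(\overline{\mathcal{O}(w)})\subset\overline{\mathcal{O}(w)}$ for every $i\in I$. On the orbit itself it is a direct word-concatenation check: if $v=f_{[\alpha]_n}(w)$ then $f_i(v)=f_{[i\alpha]_{n+1}}(w)\in\mathcal{O}(w)$. The closure version then follows from continuity of $f_i$, since $f_i(\overline{\mathcal{O}(w)})\subset\overline{f_i(\mathcal{O}(w))}\subset\overline{\mathcal{O}(w)}$. Applying this to $i_0$ gives $x=f_{i_0}(z)\in\overline{\mathcal{O}(w)}$.

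It remains to produce a point $y\in\overline{\mathcal{O}(w)}$ with $Z(u)(y)=1$; this is where the admissibility condition enters. By condition (c) there is $i^{\ast}\in I$ with $\rho_{i^{\ast}}(1)=1$. Set $y:=f_{i^{\ast}}(y')$; the invariance step above gives $y\in\overline{\mathcal{O}(w)}$, while $f_{i^{\ast}}(u)(y)\geq u(y')=1$ implies $\rho_{i^{\ast}}(f_{i^{\ast}}(u)(y))=\rho_{i^{\ast}}(1)=1$, so $Z(u)(y)=1$. This verifies the orbital membership for $Z(u)$ and completes $Z(u)\in\mathcal{F}_{\mathcal{S}}^{\ast\ast}$. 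The only really delicate point is the coordination between the two indices $i_0$ and $i^{\ast}$: we use $i_0$ just to trace the positivity of $Z(u)$ back to a preimage under $f_{i_0}$, and then we switch to $i^{\ast}$ (regardless of $i_0$) to manufacture a level-$1$ point, both of which live in the same orbit closure $\overline{\mathcal{O}(w)}$ thanks to forward-invariance. I do not anticipate any other obstacle.
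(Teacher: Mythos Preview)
Your proposal is correct and follows essentially the same route as the paper's proof: pick $i_0$ to trace $Z(u)(x)>0$ back to a preimage $z$ with $u(z)>0$, invoke the orbital hypothesis to find $w$ and a level-$1$ point $y'$ in $\overline{\mathcal{O}(w)}$, use forward invariance $f_i(\overline{\mathcal{O}(w)})\subset\overline{\mathcal{O}(w)}$ to place both $x=f_{i_0}(z)$ and $y=f_{i^\ast}(y')$ in $\overline{\mathcal{O}(w)}$, and finish via admissibility $\rho_{i^\ast}(1)=1$. The paper's argument is identical up to naming ($y_x,z_x,j_0$ in place of your $z,y',i^\ast$) and handles the ``in particular'' clause exactly as you do, via $\mathcal{F}_{\mathcal{S}}^{\ast}=\mathcal{F}_{\mathcal{S}}^{\ast\ast}\cap\mathcal{F}_X^{\ast}$ together with Proposition~\ref{PropZ(u)}.
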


\begin{proof}
Let us consider $u\in \mathcal{F}_{\mathcal{S}}^{\ast \ast }$ and $x\in X$
such that $Z\left( u\right) \left( x\right) >0$. Using the definition of $Z$%
, this is equivalent with $\max_{i\in I}\rho _{i}\left( f_{i}\left( u\right)
\left( x\right) \right) >0$. Then, there exists $i_{0}\in I$ such that $\rho
_{i_{0}}\left( f_{i_{0}}\left( u\right) \left( x\right) \right) >0$. As $%
\rho _{i_{0}}\left( 0\right) =0$, we deduce $f_{i_{0}}\left( u\right) \left(
x\right) >0$ and there exists $y_{x}\in X$ such that $f_{i_{0}}\left(
y_{x}\right) =x$ and $u\left( y_{x}\right) >0$.

Using the fact that $u\in \mathcal{F}_{\mathcal{S}}^{\ast \ast }$, we obtain
there exist $w_{x},z_{x}\in X$ such that $y_{x},z_{x}\in \overline{\mathcal{O%
}\left( w_{x}\right) }$ and $u\left( z_{x}\right) =1$.

We have%
\begin{equation*}
x=f_{i_{0}}\left( y_{x}\right) \in f_{i_{0}}\left( \overline{\mathcal{O}%
\left( w_{x}\right) }\right) \subset \overline{f_{i_{0}}\left( \mathcal{O}%
\left( w_{x}\right) \right) }\subset \overline{\mathcal{O}\left(
w_{x}\right) }
\end{equation*}%
and we obtain $x\in \overline{\mathcal{O}\left( w_{x}\right) }$.

We know that there exists $j_{0}\in I$ such that $\rho _{j_{0}}\left(
1\right) =1.$ We make the notation $z=f_{j_{0}}\left( z_{x}\right) \in
\overline{\mathcal{O}\left( w_{x}\right) }.$ Then,%
\begin{equation*}
1\geq f_{j_{0}}\left( u\right) \left( z\right) =f_{j_{0}}\left( u\right)
\left( f_{j_{0}}\left( z_{x}\right) \right) =\sup_{f_{j_{0}}\left( y\right)
=f_{j_{0}}\left( z_{x}\right) }u\left( y\right) \geq u\left( z_{x}\right) =1
\end{equation*}%
and we deduce $f_{j_{0}}\left( u\right) \left( z\right) =1$. Thus, $\rho
_{j_{0}}\left( f_{j_{0}}\left( u\right) \left( z\right) \right) =1$. As%
\begin{equation*}
1\geq Z\left( u\right) \left( z\right) \geq \rho _{j_{0}}\left(
f_{j_{0}}\left( u\right) \left( z\right) \right) =1,
\end{equation*}%
it results that $Z\left( u\right) \left( z\right) =1$ and therefore $Z\left(
u\right) \in \mathcal{F}_{\mathcal{S}}^{\ast \ast }$.

We know $\mathcal{F}_{\mathcal{S}}^{\ast }=\mathcal{F}_{\mathcal{S}}^{\ast
\ast }\cap \mathcal{F}_{X}^{\ast }$. If $u\in \mathcal{F}_{\mathcal{S}%
}^{\ast }$, then $u\in \mathcal{F}_{\mathcal{S}}^{\ast \ast }$ and $u\in
\mathcal{F}_{X}^{\ast }$. Then, $Z\left( u\right) \in \mathcal{F}_{\mathcal{S%
}}^{\ast \ast }$ and $Z\left( u\right) \in \mathcal{F}_{X}^{\ast }$. In
conclusion, $Z\left( u\right) \in \mathcal{F}_{\mathcal{S}}^{\ast }$.
\end{proof}

\begin{lemma}
\label{Z}Let $\mathcal{S}_{Z}=\left( \left( X,d\right) ,\left( f_{i}\right)
_{i\in I},\left( \rho _{i}\right) _{i\in I}\right) $ be an orbital fuzzy
iterated function system and let $\left( u_{j}\right) _{j\in J}$ be an
infinite family with elements from $\mathcal{F}_{X}^{\ast \ast }$ having the
following properties:

a) there exists $K\in P_{cp}\left( X\right) $ such that supp$u_{j}\subset K$
for all $j\in J$;

b) $\left( \vee _{j\in J}u_{j}\right) \left( x\right) =\left( \max_{j\in
J}u_{j}\right) \left( x\right) $ for all $x\in X$;

c) $\vee _{j\in J}u_{j}\in \mathcal{F}_{X}^{\ast }$.

Then,

1) $Z\left( \vee _{j\in J}u_{j}\right) $ $=\vee _{j\in J}Z\left(
u_{j}\right) \in \mathcal{F}_{X}^{\ast };$

2) $\left( \vee _{j\in J}Z\left( u_{j}\right) \right) \left( y\right)
=\max_{j\in J}\left( Z\left( u_{j}\right) \left( y\right) \right) $, for all
$y\in X$;

3) $Z^{n}\left( \vee _{j\in J}u_{j}\right) =\vee _{j\in J}Z^{n}\left(
u_{j}\right) \in \mathcal{F}_{X}^{\ast }$, for all $n\in
\mathbb{N}
$.
\end{lemma}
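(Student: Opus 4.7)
The plan is to reduce part (1) to a commutativity statement between each $\rho_i$ and pointwise suprema, deduce part (2) from Lemma \ref{lema_maxmax}, and obtain part (3) by induction on $n$ whose inductive step amounts to verifying that the family $(Z^n(u_j))_{j\in J}$ again satisfies conditions (a)--(c) and then re-applying parts (1)--(2).

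\textbf{Part (1).} I would first invoke part 2 of Lemma \ref{lema3} to push each $f_i$ through the supremum, obtaining $f_i(\vee_{j}u_j)=\vee_{j}f_i(u_j)$. It then suffices, for each fixed $i\in I$, to prove
\[
\rho_i\bigl(\vee_{j\in J}f_i(u_j)\bigr)=\vee_{j\in J}\rho_i\bigl(f_i(u_j)\bigr).
\]
Monotonicity of $\rho_i$ gives ``$\ge$'' for free, so the whole matter reduces to showing that $\sup_{j}f_i(u_j)(y)$ is attained for every $y\in X$. For $y\in f_i(X)$ I would rewrite this as $\sup_{x\in f_i^{-1}(y)\cap K}(\vee_{j}u_j)(x)$ (using that every $u_j$ vanishes outside $K$), apply Lemma \ref{lema1} to the usc function $\vee_{j}u_j$ on the compact set $f_i^{-1}(y)\cap K$ to obtain a maximizer $x_0$, and then use hypothesis (b) to extract $j_0\in J$ with $u_{j_0}(x_0)=(\vee_{j}u_j)(x_0)$. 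A short chain of inequalities based on $f_i(x_0)=y$ then forces $f_i(u_{j_0})(y)=\sup_{j}f_i(u_j)(y)$; the cases $y\notin f_i(X)$ and $f_i^{-1}(y)\cap K=\emptyset$ are trivial. Swapping the (now unrestricted) double supremum over $I$ and $J$ yields $Z(\vee_{j}u_j)=\vee_{j}Z(u_j)$, and membership in $\mathcal{F}_X^{\ast}$ follows from hypothesis (c) combined with Proposition \ref{PropZ(u)}.

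\textbf{Parts (2) and (3).} The attainment established in Part (1) shows that for every fixed $i\in I$ and $y\in X$, $\sup_{j}\rho_i(f_i(u_j)(y))$ is actually a maximum, which is precisely the hypothesis of Lemma \ref{lema_maxmax}. Swapping $\max_i$ and $\max_j$ there immediately delivers part (2). For part (3) I would proceed by induction on $n$, simultaneously establishing the equality and verifying that conditions (a)--(c) persist for the family $(Z^n(u_j))_{j\in J}$: (a) holds because $\text{supp}\,Z^{n}(u_j)\subseteq F_{\mathcal{S}}^{n}(K)\in P_{cp}(X)$; (b) is part (2) applied at level $n-1$; and (c) follows by applying Proposition \ref{PropZ(u)} iteratively to $\vee_{j}Z^{n-1}(u_j)=Z^{n-1}(\vee_{j}u_j)\in\mathcal{F}_X^{\ast}$. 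With (a)--(c) in hand at level $n$, part (1) at level $n+1$ gives the desired inductive step.

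\textbf{Main obstacle.} The crux is the attainment argument in Part (1): hypothesis (b) only grants pointwise attainment of $\vee_{j}u_j$, and the real issue is transferring it to $\vee_{j}f_i(u_j)$ despite the fact that the preimage $f_i^{-1}(y)$ is merely closed, not compact. This is where the simultaneous use of the common compact support (a) and the upper semicontinuity (c) becomes essential, since only the intersection with the compact set $K$ permits Lemma \ref{lema1} to produce a maximizer.
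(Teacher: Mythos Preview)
Your proposal is correct and follows essentially the same route as the paper: both arguments restrict to the compact set $f_i^{-1}(y)\cap K$, apply Lemma~\ref{lema1} to the usc function $\vee_j u_j$ to obtain a maximizer $x_0$, invoke hypothesis (b) to extract $j_0$, use monotonicity of $\rho_i$ to pass to a pointwise maximum, and then appeal to Lemma~\ref{lema_maxmax} and Proposition~\ref{PropZ(u)}. Your inductive verification of conditions (a)--(c) for the family $(Z^n(u_j))_{j\in J}$ in part (3) is in fact more explicit than the paper's, which simply remarks that ``one can use 1), 2) and mathematical induction.''
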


\begin{proof}
Let us fix $y\in X$ and $i\in I$.

We define the set $K_{i}:=\left\{ x\in K\text{ }|\text{ }f_{i}\left(
x\right) =y\right\} $, which is compact. Let us consider the function $%
g:K_{i}\rightarrow
\mathbb{R}
$ given by
\begin{equation*}
g\left( x\right) =\left( \vee _{j\in J}u_{j}\right) \left( x\right)
=\sup_{j\in J}u_{j}\left( x\right)
\end{equation*}%
for all $x\in K_{i}$. Using Lemma \ref{lema1} and c), we obtain there exists
$x_{0}\in K_{i}$ such that
\begin{equation*}
g\left( x_{0}\right) =\sup_{x\in K_{i}}g\left( x\right) =\sup_{f_{i}\left(
x\right) =y,x\in K_{i}}\left( \vee _{j\in J}u_{j}\right) \left( x\right)
=f_{i}\left( \vee _{j\in J}u_{j}\right) \left( y\right) .
\end{equation*}

Thus,
\begin{equation*}
g\left( x_{0}\right) =f_{i}\left( \vee _{j\in J}u_{j}\right) \left( y\right)
=\left( \vee _{j\in J}u_{j}\right) \left( x_{0}\right) \overset{\text{b)}}{=}%
\max_{j\in J}u_{j}\left( x_{0}\right) \text{.}
\end{equation*}

It implies that there exists $j_{0}\in I$ such that
\begin{equation}
f_{i}\left( \vee _{j\in J}u_{j}\right) \left( y\right) =u_{j_{0}}\left(
x_{0}\right) \text{.}  \label{1}
\end{equation}

Since $x_{0}\in K_{i}$, i.e. $f_{i}\left( x_{0}\right) =y$, we have%
\begin{equation*}
u_{j_{0}}\left( x_{0}\right) \leq \sup_{f_{i}\left( x\right)
=y}u_{j_{0}}\left( x\right) =f_{i}\left( u_{j_{0}}\right) \left( y\right)
\end{equation*}%
\begin{equation*}
\leq \sup_{f_{i}\left( x\right) =y}\sup_{j\in J}u_{j}\left( x\right)
=f_{i}\left( \vee _{j\in J}u_{j}\right) \left( y\right) \overset{(\ref{1})}{=%
}u_{j_{0}}\left( x_{0}\right) \text{.}
\end{equation*}

It results
\begin{equation*}
u_{j_{0}}\left( x_{0}\right) =f_{i}\left( u_{j_{0}}\right) \left( y\right)
=f_{i}\left( \vee _{j\in J}u_{j}\right) \left( y\right)
\end{equation*}%
\begin{equation*}
\overset{\text{Lemma \ref{lema3} 2)}}{=}\vee _{j\in J}f_{i}\left(
u_{j}\right) \left( y\right) =\sup_{j\in J}f_{i}\left( u_{j}\right) \left(
y\right) \text{.}
\end{equation*}

Therefore,%
\begin{equation}
f_{i}\left( u_{j_{0}}\right) \left( y\right) =\sup_{j\in J}f_{i}\left(
u_{j}\right) \left( y\right) =\max_{j\in J}f_{i}\left( u_{j}\right) \left(
y\right) .  \label{2}
\end{equation}

Using (\ref{2}) we have
\begin{equation*}
\rho _{i}\left( \vee _{j\in J}f_{i}\left( u_{j}\right) \left( y\right)
\right) =\rho _{i}\left( \max_{j\in J}f_{i}\left( u_{j}\right) \left(
y\right) \right)
\end{equation*}%
\begin{equation}
=\max_{j\in J}\rho _{i}\left( f_{i}\left( u_{j}\right) \left( y\right)
\right) =\max_{j\in J}\rho _{i}\left( f_{i}\left( u_{j}\right) \right)
\left( y\right) .  \label{3}
\end{equation}

Hence,%
\begin{equation*}
Z\left( \vee _{j\in J}u_{j}\right) \left( y\right) =\left( \vee _{i\in
I}\rho _{i}\left( f_{i}\left( \vee _{j\in J}u_{j}\right) \right) \right)
\left( y\right) =\max_{i\in I}\rho _{i}\left( f_{i}\left( \vee _{j\in
J}u_{j}\right) \right) \left( y\right)
\end{equation*}%
\begin{equation*}
\overset{\left( \ref{3}\right) }{=}\max_{i\in I}\max_{j\in J}\rho _{i}\left(
f_{i}\left( u_{j}\right) \right) \left( y\right) \overset{\text{Lemma \ref%
{lema_maxmax}}}{=}\max_{j\in J}\max_{i\in I}\rho _{i}\left( f_{i}\left(
u_{j}\right) \right) \left( y\right)
\end{equation*}%
\begin{equation*}
=\max_{j\in J}Z\left( u_{j}\right) \left( y\right) =\sup_{j\in J}Z\left(
u_{j}\right) \left( y\right) =\left( \vee _{j\in J}Z\left( u_{j}\right)
\right) \left( y\right) .
\end{equation*}

In conclusion, we obtained%
\begin{equation*}
Z\left( \vee _{j\in J}u_{j}\right) =\left( \vee _{j\in J}Z\left(
u_{j}\right) \right) \overset{\text{c), Prop. \ref{PropZ(u)} }}{\in }%
\mathcal{F}_{X}^{\ast }
\end{equation*}%
and
\begin{equation*}
\left( \vee _{j\in J}Z\left( u_{j}\right) \right) \left( y\right)
=\max_{j\in J}\left( Z\left( u_{j}\right) \left( y\right) \right) ,
\end{equation*}%
for all $y\in X.$

In order to prove 3), one can use 1), 2) and mathematical induction.
\end{proof}

\begin{lemma}
\label{distinf}Let $\left( u_{j}\right) _{j\in J}$ and $\left( v_{j}\right)
_{j\in J}$ be two infinite families with elements from $\mathcal{F}%
_{X}^{\ast \ast }$, having the following properties:

a) there exists $K\in P_{cp}\left( X\right) $ such that supp$u_{j}$, supp$%
v_{j}\subset K$ for all $j\in J$;

b) $\left( \vee _{j\in J}u_{j}\right) \left( x\right) =\left( \max_{j\in
J}u_{j}\right) \left( x\right) $ and $\left( \vee _{j\in J}v_{j}\right)
\left( x\right) =\left( \max_{j\in J}v_{j}\right) \left( x\right) $ for all $%
x\in X$.

Then,
\begin{equation*}
d_{\infty }\left( \vee _{j\in J}u_{j},\vee _{j\in J}v_{j}\right) \leq
\sup_{j\in J}d_{\infty }\left( u_{j},v_{j}\right) .
\end{equation*}
\end{lemma}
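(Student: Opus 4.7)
The plan is to reduce the statement to the corresponding inequality at the level of $\alpha$-cuts, which is already available via the Proposition containing equation $(\ref{sup})$, and then take a supremum over $\alpha$ using Lemma \ref{ldinf}.

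First I would fix $\alpha \in (0,1]$ and compute the $\alpha$-cut of a pointwise supremum. The key observation is that assumption b) lets one replace the supremum by a maximum pointwise, so
\[
x \in \left[\vee_{j\in J} u_j\right]^\alpha \iff \max_{j\in J} u_j(x) \geq \alpha \iff \exists\, j\in J \text{ with } u_j(x)\geq \alpha,
\]
which gives $[\vee_{j\in J} u_j]^\alpha = \cup_{j\in J}[u_j]^\alpha$, and similarly for $v$. Assumption a) guarantees that every $[u_j]^\alpha$ and $[v_j]^\alpha$ is contained in the compact (hence bounded) set $K$, so these unions are elements of $P_b(X)$ and the Proposition with estimate $(\ref{sup})$ applies, yielding
\[
h\!\left(\cup_{j\in J}[u_j]^\alpha,\ \cup_{j\in J}[v_j]^\alpha\right) \leq \sup_{j\in J} h\!\left([u_j]^\alpha,[v_j]^\alpha\right) \leq \sup_{j\in J} d_\infty(u_j,v_j),
\]
where the last inequality is the definition of $d_\infty$.

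Next I would take the supremum over $\alpha \in (0,1]$ on the left side, obtaining
\[
\sup_{\alpha\in(0,1]} h\!\left([\vee_j u_j]^\alpha,[\vee_j v_j]^\alpha\right) \leq \sup_{j\in J} d_\infty(u_j,v_j),
\]
and finally invoke Lemma \ref{ldinf} (which is stated for $u,v \in \mathcal{F}_X^{\ast\ast}$, and $\vee_j u_j, \vee_j v_j$ belong to $\mathcal{F}_X^{\ast\ast}$ by part 1) of Lemma \ref{lema3}) to identify the left-hand side with $d_\infty(\vee_j u_j,\vee_j v_j)$, which concludes the proof.

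The only delicate step is the identification $[\vee_{j\in J} u_j]^\alpha = \cup_{j\in J}[u_j]^\alpha$: for a general infinite family one only has the inclusion $\supseteq$ automatically, while the reverse inclusion can fail if $\sup_j u_j(x)$ is not attained (one could have $u_j(x) < \alpha$ for every $j$ but $\sup_j u_j(x) = \alpha$). Assumption b) is exactly what rules this out and makes the cut of the join equal to the union of the cuts, so this is where the hypothesis is used essentially; the rest is just assembling already proved ingredients.
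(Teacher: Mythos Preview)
Your proof is correct and follows essentially the same route as the paper: compute the $\alpha$-cut of the join as the union of the $\alpha$-cuts via assumption b), apply the estimate $(\ref{sup})$, bound by $d_\infty(u_j,v_j)$, and take the supremum over $\alpha\in(0,1]$ using Lemma~\ref{ldinf}. Your explicit justification that $\vee_j u_j,\vee_j v_j\in\mathcal{F}_X^{\ast\ast}$ via Lemma~\ref{lema3} and your discussion of why assumption b) is essential are welcome additions that the paper leaves implicit.
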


\begin{proof}
Let $\left( u_{j}\right) _{j\in J}$ and $\left( v_{j}\right) _{j\in J}$ be
two infinite families from $\mathcal{F}_{X}^{\ast \ast }$, which have the
properties a) and b). Applying Lemma \ref{ldinf}, we have%
\begin{equation*}
d_{\infty }\left( \vee _{j\in J}u_{j},\vee _{j\in J}v_{j}\right)
=\sup_{\alpha \in (0,1]}h\left( \left[ \vee _{j\in J}u_{j}\right] ^{\alpha },%
\left[ \vee _{j\in J}v_{j}\right] ^{\alpha }\right) .
\end{equation*}

Using the definition, we obtain%
\begin{equation*}
\left[ \vee _{j\in J}u_{j}\right] ^{\alpha }=\left\{ x\in X\text{ }|\text{ }%
\left( \vee _{j\in J}u_{j}\right) \left( x\right) \geq \alpha \right\}
\end{equation*}%
\begin{equation*}
\overset{\text{b)}}{=}\left\{ x\in X\text{ }|\text{ }\left( \max_{j\in
J}u_{j}\right) \left( x\right) \geq \alpha \right\}
\end{equation*}%
\begin{equation*}
=\cup _{j\in J}\left\{ x\in X\text{ }|\text{ }u_{j}\left( x\right) \geq
\alpha \right\} =\cup _{j\in J}\left[ u_{j}\right] ^{\alpha }.
\end{equation*}

Therefore,%
\begin{equation*}
d_{\infty }\left( \vee _{j\in J}u_{j},\vee _{j\in J}v_{j}\right)
=\sup_{\alpha \in (0,1]}h\left( \left[ \vee _{j\in J}u_{j}\right] ^{\alpha },%
\left[ \vee _{j\in J}v_{j}\right] ^{\alpha }\right)
\end{equation*}%
\begin{equation*}
=\sup_{\alpha \in (0,1]}h\left( \cup _{j\in J}\left[ u_{j}\right] ^{\alpha
},\cup _{j\in J}\left[ v_{j}\right] ^{\alpha }\right)
\end{equation*}%
\begin{equation}
\overset{(\ref{sup})}{\leq }\sup_{\alpha \in (0,1]}\sup_{j\in J}h\left( %
\left[ u_{j}\right] ^{\alpha },\left[ v_{j}\right] ^{\alpha }\right) \text{.}
\label{rel2}
\end{equation}

We have
\begin{equation}
h\left( \left[ u_{j}\right] ^{\alpha },\left[ v_{j}\right] ^{\alpha }\right)
\leq \sup_{\alpha \in \left[ 0,1\right] }h\left( \left[ u_{j}\right]
^{\alpha },\left[ v_{j}\right] ^{\alpha }\right) =d_{\infty }\left(
u_{j},v_{j}\right)  \label{rel1}
\end{equation}%
for all $j\in J$ and $\alpha \in \left[ 0,1\right] $. Using (\ref{rel2}) and
(\ref{rel1}) it results%
\begin{equation*}
d_{\infty }\left( \vee _{j\in J}u_{j},\vee _{j\in J}v_{j}\right) \leq
\sup_{\alpha \in (0,1]}\sup_{j\in J}h\left( \left[ u_{j}\right] ^{\alpha },%
\left[ v_{j}\right] ^{\alpha }\right)
\end{equation*}%
\begin{equation*}
\leq \sup_{\alpha \in (0,1]}\sup_{j\in J}d_{\infty }\left(
u_{j},v_{j}\right) =\sup_{j\in J}d_{\infty }\left( u_{j},v_{j}\right)
\end{equation*}%
and we obtain the conclusion.
\end{proof}

\begin{lemma}
\label{lemadiam}
\begin{equation*}
d_{\infty }\left( u,v\right) \leq diam\left( \left[ u\right] ^{0}\cup \left[
v\right] ^{0}\right)
\end{equation*}%
for all $u,v\in \mathcal{F}_{X}^{\ast \ast }$.
\end{lemma}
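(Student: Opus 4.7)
The strategy is to combine Lemma \ref{ldinf} with Proposition \ref{diam} and the monotonicity of $\alpha$-cuts in $\alpha$.

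First I would invoke Lemma \ref{ldinf}, valid for $u,v \in \mathcal{F}_X^{\ast\ast}$, to rewrite
\[
d_{\infty}(u,v) = \sup_{\alpha \in (0,1]} h\bigl([u]^{\alpha}, [v]^{\alpha}\bigr).
\]
The point of using this form (rather than $\sup_{\alpha \in [0,1]}$ from the definition) is that for $\alpha \in (0,1]$ the inclusions $[u]^{\alpha} \subseteq \{x : u(x)>0\} \subseteq \overline{\{x : u(x)>0\}} = [u]^0$ hold, and similarly for $v$. In particular $[u]^{\alpha} \cup [v]^{\alpha} \subseteq [u]^0 \cup [v]^0$ for every $\alpha \in (0,1]$, so the diameter of the former is bounded by the diameter of the latter.

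Next, for each fixed $\alpha \in (0,1]$, apply Proposition \ref{diam} to the bounded sets $[u]^{\alpha}, [v]^{\alpha} \in P_b(X)$:
\[
h\bigl([u]^{\alpha}, [v]^{\alpha}\bigr) \leq \operatorname{diam}\bigl([u]^{\alpha} \cup [v]^{\alpha}\bigr) \leq \operatorname{diam}\bigl([u]^0 \cup [v]^0\bigr).
\]
Taking the supremum over $\alpha \in (0,1]$ on the left, the right-hand side remains the same constant, yielding
\[
d_{\infty}(u,v) \leq \operatorname{diam}\bigl([u]^0 \cup [v]^0\bigr),
\]
which is the desired inequality.

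There is essentially no obstacle here; the only subtlety is that Proposition \ref{diam} is stated for elements of $P_b(X)$, so one should briefly note that for $u \in \mathcal{F}_X^{\ast\ast}$ the $\alpha$-cuts with $\alpha>0$ are subsets of the compact set $[u]^0$ and hence bounded, so the proposition applies. Restricting to $\alpha \in (0,1]$ via Lemma \ref{ldinf} is what makes the inclusion $[u]^{\alpha} \subseteq [u]^0$ immediate from the definition of the $\alpha$-cut (without needing the closure), after which the argument is a one-line application of monotonicity of diameter under inclusion.
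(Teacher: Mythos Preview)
Your proof is correct and follows essentially the same approach as the paper: apply Proposition~\ref{diam} to each pair of $\alpha$-cuts, use the inclusion $[u]^\alpha \subseteq [u]^0$, and take the supremum. The only cosmetic difference is that you invoke Lemma~\ref{ldinf} to restrict to $\alpha\in(0,1]$, whereas the paper works directly with $\alpha\in[0,1]$ (the inclusion $[u]^\alpha\subseteq[u]^0$ holds trivially at $\alpha=0$ as well, so this extra step is not actually needed).
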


\begin{proof}
Let $u,v\in \mathcal{F}_{X}^{\ast \ast }$. We have%
\begin{equation*}
d_{\infty }\left( u,v\right) =\sup_{\alpha \in \left[ 0,1\right] }h\left( %
\left[ u\right] ^{\alpha },\left[ v\right] ^{\alpha }\right) \text{.}
\end{equation*}

Applying Proposition \ref{diam} and the fact that $\left[ u\right] ^{\alpha
}\subset \left[ u\right] ^{0}$ for all $\alpha \in \left[ 0,1\right] $, we
deduce%
\begin{equation*}
d_{\infty }\left( u,v\right) \leq \sup_{\alpha \in \left[ 0,1\right]
}diam\left( \left[ u\right] ^{\alpha }\cup \left[ v\right] ^{\alpha }\right)
\leq diam\left( \left[ u\right] ^{0}\cup \left[ v\right] ^{0}\right) \text{.}
\end{equation*}

Hence, we obtained
\begin{equation*}
d_{\infty }\left( u,v\right) \leq diam\left( \left[ u\right] ^{0}\cup \left[
v\right] ^{0}\right)
\end{equation*}%
for all $u,v\in \mathcal{F}_{X}^{\ast \ast }$.
\end{proof}

\begin{lemma}
\label{Z0}Let $\mathcal{S}_{Z}=\left( \left( X,d\right) ,\left( f_{i}\right)
_{i\in I},\left( \rho _{i}\right) _{i\in I}\right) $ be an orbital fuzzy
iterated function system. Let $F_{\mathcal{S}}$ be the fractal operator
associated to $\mathcal{S}_{Z}$ and let $Z$ be the fuzzy operator of $%
\mathcal{S}_{Z}$. Then,
\begin{equation*}
\left[ Z\left( u\right) \right] ^{0}\subset F_{\mathcal{S}}\left( \left[ u%
\right] ^{0}\right)
\end{equation*}%
for every $u\in \mathcal{F}_{\mathcal{S}}^{\ast \ast }$.
\end{lemma}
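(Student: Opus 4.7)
The plan is to unpack the definitions on both sides and show the stronger inclusion
\[
\{x\in X\mid Z(u)(x)>0\}\subset F_{\mathcal{S}}\bigl([u]^{0}\bigr),
\]
and then pass to closures, using the fact that the right-hand side is already closed.

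First, I would take an arbitrary $x\in X$ with $Z(u)(x)>0$. By the definition of $Z$ and the fact that $I$ is finite, $Z(u)(x)=\max_{i\in I}\rho_{i}(f_{i}(u)(x))>0$, so there exists $i_{0}\in I$ with $\rho_{i_{0}}(f_{i_{0}}(u)(x))>0$. Since the admissibility condition gives $\rho_{i_{0}}(0)=0$, this forces $f_{i_{0}}(u)(x)>0$. Applying Zadeh's extension principle to $f_{i_{0}}(u)$, the value $f_{i_{0}}(u)(x)=\sup_{f_{i_{0}}(y)=x}u(y)$ is strictly positive, so one can pick $y\in X$ with $f_{i_{0}}(y)=x$ and $u(y)>0$. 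Then $y\in\{z\in X\mid u(z)>0\}\subset [u]^{0}$, hence $x=f_{i_{0}}(y)\in f_{i_{0}}([u]^{0})\subset F_{\mathcal{S}}([u]^{0})$.

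Next, I would close up. Because $u\in\mathcal{F}_{\mathcal{S}}^{\ast\ast}\subset\mathcal{F}_{X}^{\ast\ast}$, the set $[u]^{0}$ is compact; each $f_{i}$ is continuous, so $f_{i}([u]^{0})\in P_{cp}(X)$; and $I$ is finite, so the finite union $F_{\mathcal{S}}([u]^{0})=\cup_{i\in I}f_{i}([u]^{0})$ is compact and in particular closed. Therefore taking closures in the inclusion above gives
\[
[Z(u)]^{0}=\overline{\{x\in X\mid Z(u)(x)>0\}}\subset\overline{F_{\mathcal{S}}([u]^{0})}=F_{\mathcal{S}}([u]^{0}),
\]
which is the claimed statement.

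There is essentially no hard obstacle here; the argument is a direct two-step unfolding of the definitions. The only subtle points are (i) using $\rho_{i_{0}}(0)=0$ from the admissibility hypothesis to pass from $\rho_{i_{0}}(f_{i_{0}}(u)(x))>0$ to $f_{i_{0}}(u)(x)>0$, and (ii) using finiteness of $I$ both to turn the supremum into a maximum and to guarantee that $F_{\mathcal{S}}([u]^{0})$ is closed so that the closure on the left-hand side does not enlarge the right-hand side.
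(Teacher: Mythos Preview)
Your proof is correct and follows essentially the same route as the paper's: both show that any $x$ with $Z(u)(x)>0$ lies in $F_{\mathcal{S}}([u]^{0})$ by tracing back through the definitions of $Z$, the admissibility condition $\rho_{i_{0}}(0)=0$, and Zadeh's extension principle. Your version is in fact slightly more explicit than the paper's, which concludes ``Therefore, $[Z(u)]^{0}\subset F_{\mathcal{S}}([u]^{0})$'' without spelling out the closure step or the compactness of $F_{\mathcal{S}}([u]^{0})$.
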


\begin{proof}
Let $\alpha \in (0,1]$ and $x\in \left[ Z\left( u\right) \right] ^{\alpha }$%
. Then, there exists $i_{0}\in I$ such that $\rho _{i_{0}}\left(
f_{i_{0}}\left( u\right) \right) \left( x\right) =\rho _{i_{0}}\left(
f_{i_{0}}\left( u\right) \left( x\right) \right) \geq \alpha $. As $\rho
_{i_{0}}\left( 0\right) =0$, we deduce $f_{i_{0}}\left( u\right) \left(
x\right) >0$. Let us consider $\beta >0$ such that
\begin{equation*}
f_{i_{0}}\left( u\right) \left( x\right) =\sup_{f_{i_{0}}\left( y\right)
=x}u\left( y\right) >\beta \text{.}
\end{equation*}%
Then, there exists $y_{x}\in X$ such that $u\left( y_{x}\right) >\beta $ and
$f_{i}\left( y_{x}\right) =x$. It implies that $y_{x}\in \left[ u\right]
^{\beta }$. Moreover, we have $f_{i_{0}}\left( y_{x}\right) =x$. Hence, $%
x\in f_{i_{0}}\left( \left[ u\right] ^{\beta }\right) \subset $ $%
f_{i_{0}}\left( \left[ u\right] ^{0}\right) \subset F_{\mathcal{S}}\left( %
\left[ u\right] ^{0}\right) $. Therefore, $\left[ Z\left( u\right) \right]
^{0}\subset F_{\mathcal{S}}\left( \left[ u\right] ^{0}\right) $.
\end{proof}

\begin{theorem}
\label{thm}The fuzzy operator associated to an orbital fuzzy iterated
function system $\mathcal{S}$ is a weakly Picard operator on $\mathcal{F}_{%
\mathcal{S}}^{\ast }$ $.$
\end{theorem}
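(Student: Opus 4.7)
The plan is to fix $u \in \mathcal{F}_{\mathcal{S}}^{\ast}$ and show that $(Z^{n}(u))_{n}$ converges in the complete metric space $(\mathcal{F}_{\mathcal{S}}^{\ast},d_{\infty})$ to a fixed point of $Z$. Completeness is immediate: $(\mathcal{F}_{X}^{\ast},d_{\infty})$ is complete by Theorem~\ref{complete}, and Lemmas~\ref{Fs_closed} and~\ref{lema3bis} exhibit $\mathcal{F}_{\mathcal{S}}^{\ast}$ as a closed, $Z$-invariant subset. Hence it suffices to prove that $(Z^{n}(u))_{n}$ is Cauchy and that its limit is fixed by $Z$.

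I would first confine every iterate to a single compact set. Iterating Lemma~\ref{Z0} gives $[Z^{n}(u)]^{0}\subseteq F_{\mathcal{S}}^{n}([u]^{0})$ for all $n$; since $F_{\mathcal{S}}^{n}([u]^{0})\to A_{[u]^{0}}$ by the crisp orbital IFS theorem, Proposition~\ref{reun_comp} produces a compact $\tilde{K}\subseteq X$ containing $[u]^{0}$, every $F_{\mathcal{S}}^{n}([u]^{0})$, and $A_{[u]^{0}}$. The key structural observation is that for every $y\in [u]^{0}$ and every $j\in I$, the $\mathcal{F}_{\mathcal{S}}^{\ast}$ property together with $F_{\mathcal{S}}$-invariance of orbit closures places both $y$ and $f_{j}(y)$ inside a common $\overline{\mathcal{O}(w_{y})}\subseteq \tilde{K}$, where orbital contractivity is available with the uniform constant $C$.

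Next, I would unfold each iterate as a finite join over words. Using Lemma~\ref{lema3}(2) repeatedly, together with monotonicity of every ndrc map and the finiteness of $I$, one obtains
\begin{equation*}
Z^{n}(u)=\vee_{\omega \in \Lambda_{n}(I)} T_{\omega}(u),\quad T_{\omega}:=\rho_{\omega_{1}}\circ f_{\omega_{1}}\circ \cdots \circ \rho_{\omega_{n}}\circ f_{\omega_{n}},
\end{equation*}
and analogously $Z^{n+1}(u)=\vee_{\omega \in \Lambda_{n}(I)} T_{\omega}(Z(u))$. A finite-family variant of Lemma~\ref{distinf} then gives
\begin{equation*}
d_{\infty}\bigl(Z^{n}(u),Z^{n+1}(u)\bigr)\leq \max_{\omega \in \Lambda_{n}(I)} d_{\infty}\bigl(T_{\omega}(u),T_{\omega}(Z(u))\bigr).
\end{equation*}
Each branch on the right is controlled via Lemma~\ref{lemadiam} and the pointwise orbital estimate $d(f_{\omega}(y),f_{\omega}(f_{j}(y)))\leq C^{n}d(y,f_{j}(y))\leq C^{n}\mathrm{diam}(\tilde{K})$; a careful $\alpha$-cut translation, matching level sets through the monotonicity of each $\rho_{i}$, upgrades this pointwise bound to $d_{\infty}(T_{\omega}(u),T_{\omega}(Z(u)))\leq C^{n}\mathrm{diam}(\tilde{K})$. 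Summing a geometric series yields Cauchyness.

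Finally, Lemma~\ref{Fs_closed} places the limit $u^{\ast}$ in $\mathcal{F}_{\mathcal{S}}^{\ast}$, and one passes to the limit in $Z(Z^{n}(u))=Z^{n+1}(u)$ by an $\alpha$-cut argument that combines Proposition~\ref{h_cont} with the right-continuity of each $\rho_{i}$, yielding $Z(u^{\ast})=u^{\ast}$. The hard part is the Cauchy estimate in the third paragraph: unlike Banach- or $\varphi$-contraction fuzzy IFS, the fuzzy operator $Z$ is \emph{not} a global contraction on $\mathcal{F}_{\mathcal{S}}^{\ast}$, so one must use the $\mathcal{F}_{\mathcal{S}}^{\ast}$-structure to relocate every pair of compared points into a common orbit before the contraction factor $C^{n}$ can be extracted, and then lift the pointwise estimate uniformly to all $\alpha$-cuts. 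A secondary subtlety is ensuring that every supremum in the branch decomposition is attained pointwise, so that Lemmas~\ref{Z} and~\ref{distinf} apply; this ultimately rests on the finiteness of $I$ together with Lemma~\ref{lema2}.
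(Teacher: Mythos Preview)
Your overall scaffolding (completeness of $\mathcal{F}_{\mathcal{S}}^{\ast}$, $Z$-invariance, continuity of $Z$ to pass to the limit) matches the paper, and the word-decomposition $Z^{n}(u)=\vee_{\omega\in\Lambda_{n}(I)}T_{\omega}(u)$ is a legitimate alternative to the paper's strategy. The gap is in the Cauchy estimate itself.

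The Lemma~\ref{lemadiam} route fails outright: $[T_{\omega}(u)]^{0}\cup[T_{\omega}(Z(u))]^{0}\subseteq f_{\omega}\bigl([u]^{0}\cup F_{\mathcal{S}}([u]^{0})\bigr)$, and the diameter of this set need not shrink like $C^{n}$, because $[u]^{0}$ may meet several distinct orbit closures on which $f_{\omega}$ is not jointly contractive. In the paper's own Example ($f_{1}(x,y)=(x,y/2)$, $f_{2}(x,y)=(x,(y+1)/2)$, $[u]^{0}=[0,1]\times\{0\}$) one has $\mathrm{diam}\,f_{\omega}([u]^{0})=1$ for every word $\omega$. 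Your $\alpha$-cut matching has the same problem: pairing $y\in[u]^{\gamma}$ with $f_{j}(y)$ only places $f_{j}(y)$ in $[Z(u)]^{\gamma}$ when $\rho_{j}(u(y))\geq\gamma$ for some $j$, and the admissibility axioms do not force $\rho_{j}(\gamma)\geq\gamma$. Your stated use of the $\mathcal{F}_{\mathcal{S}}^{\ast}$-structure (``$y$ and $f_{j}(y)$ lie in a common $\overline{\mathcal{O}(w_{y})}$'') is automatic for any $y$ and does not resolve this level-set mismatch.

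The paper bypasses this by decomposing the \emph{argument} rather than the \emph{operator}: it writes $u=\vee_{x\in[u]^{\ast}}u^{x}$ with $u^{x}=u\cdot\chi_{\overline{\mathcal{O}(w_{x})}}$, checks this supremum is a pointwise maximum, and invokes Lemma~\ref{Z} to get $Z^{n}(u)=\vee_{x}Z^{n}(u^{x})$. Each $u^{x}$ is supported in a single orbit closure, where the system is a classical contractive IFS, so the Cabrelli--Forte--Molter--Vrscay theorem gives $d_{\infty}(Z^{n+1}(u^{x}),Z^{n}(u^{x}))\leq C^{n}d_{\infty}(Z(u^{x}),u^{x})$ directly; Lemma~\ref{distinf} then transports this to $u$, and Lemmas~\ref{lemadiam} and~\ref{Z0} give a uniform bound on $d_{\infty}(Z(u^{x}),u^{x})$. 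The genuine role of $\mathcal{F}_{\mathcal{S}}^{\ast}$ here is that each $u^{x}$ remains \emph{normal}: the point $z$ with $u(z)=1$ promised by the definition lies in the same $\overline{\mathcal{O}(w_{x})}$, so $u^{x}\in\mathcal{F}_{X}^{\ast}$ and the classical fuzzy result applies. That normality-on-each-orbit is precisely the piece of the $\mathcal{F}_{\mathcal{S}}^{\ast}$-structure your argument does not exploit.
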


\begin{proof}
Let $\mathcal{S}_{Z}=\left( \left( X,d\right) ,\left( f_{i}\right) _{i\in
I},\left( \rho _{i}\right) _{i\in I}\right) $ be such a system and let $Z$
be the fuzzy operator associated to $\mathcal{S}_{Z}$.

Let us consider $x\in X$ and $u\in $ $\mathcal{F}_{\mathcal{S}}^{\ast }$
such that $u\left( x\right) >0$. Thus, there exist $w_{x},y_{x}\in X$ such
that $x,y_{x}\in \overline{\mathcal{O}\left( w_{x}\right) }$ and $u\left(
y_{x}\right) =1$. We define the function $u^{x}:X\rightarrow \left[ 0,1%
\right] $ given by
\begin{equation*}
u^{x}\left( y\right) =\left\{
\begin{array}{c}
u\left( y\right) \text{, if }y\in \overline{\mathcal{O}\left( w_{x}\right) }
\\
0\text{, otherwise}%
\end{array}%
\right.
\end{equation*}%
for all $y\in X$. It is obvious that $u^{x}\in \mathcal{F}_{\mathcal{S}%
}^{\ast }$.

We make the following notation: $\left[ u\right] ^{\ast }=\cup _{0<\alpha
\leq 1}\left[ u\right] ^{\alpha }=\left\{ x\in X\text{ \ }|\text{ }u\left(
x\right) >0\right\} $.

\begin{claim}
\label{cl_ux}%
\begin{equation}
u=\vee _{x\in \left[ u\right] ^{\ast }}u^{x}  \label{ux}
\end{equation}%
for all $u\in \mathcal{F}_{\mathcal{S}}^{\ast }$.
\end{claim}

Justification: \

Let us consider $u\in \mathcal{F}_{\mathcal{S}}^{\ast }$ .

It is obvious that $u^{x}\leq u$ for all $x\in \left[ u\right] ^{\ast }$. It
follows that $\vee _{x\in \left[ u\right] ^{\ast }}u^{x}\leq u$.

Let $y\in X$. If $u\left( y\right) =0$, we have $u\left( y\right) \leq \vee
_{x\in \left[ u\right] ^{\ast }}u^{x}\left( y\right) $. If $u\left( y\right)
>0$, then $y\in \left[ u\right] ^{\ast }$ and $\vee _{x\in \left[ u\right]
^{\ast }}u^{x}\left( y\right) \geq u^{y}\left( y\right) =u\left( y\right) $.
Thus, $\vee _{x\in \left[ u\right] ^{\ast }}u^{x}\geq u$.

From the both inequalities we deduce the conclusion.

Let us note that in the above Claim we also proved that
\begin{equation}
\left( \vee _{x\in \left[ u\right] ^{\ast }}u^{x}\right) \left( y\right)
=u^{y}\left( y\right) =\max_{x\in \left[ u\right] ^{\ast }}u^{x}\left(
y\right)  \label{rem_max}
\end{equation}%
for all $y\in \left[ u\right] ^{\ast }$.

\begin{claim}
\label{Zn}%
\begin{equation*}
Z^{n}\left( u\right) =\vee _{x\in \left[ u\right] ^{\ast }}Z^{n}\left(
u^{x}\right)
\end{equation*}%
for all $u\in \mathcal{F}_{\mathcal{S}}^{\ast }$ and $n\in
\mathbb{N}
$.
\end{claim}

Justification:

Let $u\in \mathcal{F}_{\mathcal{S}}^{\ast }$. From relation (\ref{ux}) we
deduce%
\begin{equation*}
Z^{n}\left( u\right) =Z^{n}\left( \vee _{x\in \left[ u\right] ^{\ast
}}u^{x}\right)
\end{equation*}%
for all $n\in
\mathbb{N}
$. Using relation (\ref{rem_max}) and Lemma \ref{Z}, we obtain%
\begin{equation*}
Z^{n}\left( \vee _{x\in \left[ u\right] ^{\ast }}u^{x}\right) =\vee _{x\in %
\left[ u\right] ^{\ast }}Z^{n}\left( u^{x}\right)
\end{equation*}%
for all $n\in
\mathbb{N}
$. Therefore,%
\begin{equation*}
Z^{n}\left( u\right) =\vee _{x\in \left[ u\right] ^{\ast }}Z^{n}\left(
u^{x}\right)
\end{equation*}%
for all $u\in \mathcal{F}_{\mathcal{S}}^{\ast }$ and $n\in
\mathbb{N}
$. Thus, we completed the justification of the Claim.

Applying Claim \ref{Zn} and Lemma \ref{distinf} we deduce%
\begin{equation*}
d_{\infty }\left( Z^{n+1}\left( u\right) ,Z^{n}\left( u\right) \right)
=d_{\infty }\left( \vee _{x\in \left[ u\right] ^{\ast }}Z^{n+1}\left(
u^{x}\right) ,\vee _{x\in \left[ u\right] ^{\ast }}Z^{n}\left( u^{x}\right)
\right)
\end{equation*}%
\begin{equation*}
\leq \sup_{x\in \left[ u\right] ^{\ast }}d_{\infty }\left( Z^{n+1}\left(
u^{x}\right) ,Z^{n}\left( u^{x}\right) \right)
\end{equation*}%
for all $u\in \mathcal{F}_{\mathcal{S}}^{\ast }$ and $n\in
\mathbb{N}
$.

Using the fact that the system $\mathcal{S}_{Z}$ restricted to the $%
\overline{\mathcal{O}\left( w_{x}\right) }$ is a classical iterated function
system and applying Theorem 2.4.1 from \cite{Cabrelli}, we obtain%
\begin{equation*}
d_{\infty }\left( Z^{n+1}\left( u^{x}\right) ,Z^{n}\left( u^{x}\right)
\right) \leq C^{n}d_{\infty }\left( Z\left( u^{x}\right) ,u^{x}\right)
\end{equation*}%
for all $n\in
\mathbb{N}
$. Thus,%
\begin{equation}
d_{\infty }\left( Z^{n+1}\left( u\right) ,Z^{n}\left( u\right) \right) \leq
C^{n}\sup_{x\in \left[ u\right] ^{\ast }}d_{\infty }\left( Z\left(
u^{x}\right) ,u^{x}\right)  \label{r1}
\end{equation}%
for all $n\in
\mathbb{N}
.$

Now, let us consider $m,n\in
\mathbb{N}
$ with $m\leq n$. Using the triangle inequality and relation (\ref{r1}) we
have%
\begin{equation*}
d_{\infty }\left( Z^{m}\left( u\right) ,Z^{n}\left( u\right) \right) \leq
d_{\infty }\left( Z^{m}\left( u\right) ,Z^{m+1}\left( u\right) \right)
\end{equation*}%
\begin{equation*}
+d_{\infty }\left( Z^{m+1}\left( u\right) ,Z^{m+2}\left( u\right) \right)
+...+d_{\infty }\left( Z^{n-1}\left( u\right) ,Z^{n}\left( u\right) \right)
\end{equation*}%
\begin{equation*}
\leq \left( C^{m}+C^{m+1}+...+C^{n-1}\right) \cdot \sup_{x\in \left[ u\right]
^{\ast }}d_{\infty }\left( Z\left( u^{x}\right) ,u^{x}\right)
\end{equation*}%
\begin{equation*}
=\frac{C^{m}-C^{n}}{1-C}\cdot \sup_{x\in \left[ u\right] ^{\ast }}d_{\infty
}\left( Z\left( u^{x}\right) ,u^{x}\right) \text{.}
\end{equation*}

Hence,%
\begin{equation}
d_{\infty }\left( Z^{m}\left( u\right) ,Z^{n}\left( u\right) \right) \leq
\frac{C^{m}-C^{n}}{1-C}\cdot \sup_{x\in \left[ u\right] ^{\ast }}d_{\infty
}\left( Z\left( u^{x}\right) ,u^{x}\right)  \label{r2}
\end{equation}%
for all $m,n\in
\mathbb{N}
$ with $m\leq n$. Applying Lemma \ref{lemadiam} and then Lemma \ref{Z0} we
have%
\begin{equation*}
d_{\infty }\left( Z\left( u^{x}\right) ,u^{x}\right) \leq diam\left( \left[
Z\left( u^{x}\right) \right] ^{0}\cup \left[ u^{x}\right] ^{0}\right)
\end{equation*}%
\begin{equation}
\leq diam\left( F_{\mathcal{S}}\left( \left[ u^{x}\right] ^{0}\right) \cup %
\left[ u^{x}\right] ^{0}\right) \leq diam\left( F_{\mathcal{S}}\left( \text{%
supp}u\right) \cup \text{supp}u\right) .  \label{r3}
\end{equation}

Therefore, using (\ref{r2}) and (\ref{r3}) we obtain
\begin{equation}
d_{\infty }\left( Z^{m}\left( u\right) ,Z^{n}\left( u\right) \right) \leq
\frac{C^{m}-C^{n}}{1-C}\cdot diam\left( F_{\mathcal{S}}\left( \text{supp}%
u\right) \cup \text{supp}u\right)  \label{C1}
\end{equation}%
for all $m,n\in
\mathbb{N}
$ with $m\leq n$. Since $C\in \lbrack 0,1)$ and $diam\left( F_{\mathcal{S}%
}\left( \text{supp}u\right) \cup \text{supp}u\right) $ is finite, it results
that the sequence $\left( Z^{n}\left( u\right) \right) _{n\in
\mathbb{N}
}$ is Cauchy. Applying Theorem \ref{complete} we deduce $\left( Z^{n}\left(
u\right) \right) _{n\in
\mathbb{N}
}$ is convergent. We will denote its limit by $\mathbf{u}_{u}$. Using Lemma %
\ref{lema3bis}, we have $Z^{n}\left( u\right) \in \mathcal{F}_{\mathcal{S}%
}^{\ast }$. Applying Lemma \ref{Fs_closed}, we obtain $\mathbf{u}_{u}\in
\mathcal{F}_{\mathcal{S}}^{\ast }$. From relation (\ref{C1}), by passing to
limit as $n\rightarrow \infty $, we deduce%
\begin{equation}
d_{\infty }\left( Z^{m}\left( u\right) ,\mathbf{u}_{u}\right) \leq \frac{%
C^{m}}{1-C}\cdot diam\left( F_{\mathcal{S}}\left( \text{supp}u\right) \cup
\text{supp}u\right)  \label{C2}
\end{equation}%
for all $m\in
\mathbb{N}
$.

\begin{claim}
\label{cl1}%
\begin{equation*}
h\left( \left[ \rho _{i}\left( u\right) \right] ^{\alpha },\left[ \rho
_{i}\left( v\right) \right] ^{\alpha }\right) \leq d_{\infty }\left(
u,v\right)
\end{equation*}%
for all $u,v\in \mathcal{F}_{\mathcal{S}}^{\ast }$, $i\in I$ and $\alpha \in
(0,1]$.
\end{claim}

Justification: Let $u,v\in \mathcal{F}_{\mathcal{S}}^{\ast },$ $i\in I$ and $%
\alpha \in (0,1]$. We define the set $\beta _{\alpha ,i}:=\inf \left\{
\gamma \text{ }|\text{ }\rho _{i}\left( \gamma \right) \geq \alpha \right\} $%
. Thus, we deduce there exists a decreasing sequence $\left( \gamma
_{n}\right) _{n}$ which is convergent to $\beta _{\alpha ,i}$ and has the
property that $\rho _{i}\left( \gamma _{n}\right) \geq \alpha $ for all $%
n\in
\mathbb{N}
$ and $i\in I$. By passing to limit as $n\rightarrow \infty $ and using the
fact that $\rho _{i}$ is a right continuous function for all $i\in I$, we
deduce $\rho _{i}\left( \beta _{\alpha ,i}\right) \geq \alpha $ for all $%
i\in I$. Thus, $\left[ \rho _{i}\left( u\right) \right] ^{\alpha }=\left[ u%
\right] ^{\beta _{\alpha ,i}}$ for all $i\in I,$ $\alpha \in (0,1]$ and $%
u\in \mathcal{F}_{\mathcal{S}}^{\ast }$. We obtain%
\begin{equation*}
h\left( \left[ \rho _{i}\left( u\right) \right] ^{\alpha },\left[ \rho
_{i}\left( v\right) \right] ^{\alpha }\right) =h\left( \left[ u\right]
^{\beta _{\alpha ,i}},\left[ v\right] ^{\beta _{\alpha ,i}}\right) \leq
d_{\infty }\left( u,v\right)
\end{equation*}%
for all $i\in I$, $\alpha \in (0,1]$ and $u,v\in \mathcal{F}_{\mathcal{S}%
}^{\ast }$.

\begin{claim}
\label{cl_ineq}Let $\left( u_{n}\right) _{n}$ be a sequence of elements from
$\mathcal{F}_{\mathcal{S}}^{\ast }$ and let $u\in \mathcal{F}_{\mathcal{S}%
}^{\ast }$ such that $\lim_{n\rightarrow \infty }d_{\infty }\left(
u_{n},u\right) \rightarrow 0$. Then,
\begin{equation*}
d_{\infty }\left( Z\left( u_{n}\right) ,Z\left( u\right) \right) \leq
\max_{i\in I}d_{\infty }\left( f_{i}\left( u_{n}\right) ,f_{i}\left(
u\right) \right)
\end{equation*}%
for all $n\in
\mathbb{N}
$.
\end{claim}

Justification: Let $i\in I,$ $\alpha \in (0,1]$ and $n\in
\mathbb{N}
$. We have
\begin{equation*}
h\left( \left[ Z\left( u_{n}\right) \right] ^{\alpha },\left[ Z\left(
u\right) \right] ^{\alpha }\right) =h\left( \left[ \vee _{i\in I}\rho
_{i}\left( f_{i}\left( u_{n}\right) \right) \right] ^{\alpha },\left[ \vee
_{i\in I}\rho _{i}\left( f_{i}\left( u\right) \right) \right] ^{\alpha
}\right)
\end{equation*}%
\begin{equation*}
=h\left( \cup _{i\in I}\left[ \rho _{i}\left( f_{i}\left( u_{n}\right)
\right) \right] ^{\alpha },\cup _{i\in I}\left[ \rho _{i}\left( f_{i}\left(
u\right) \right) \right] ^{\alpha }\right)
\end{equation*}%
\begin{equation*}
\overset{(\ref{sup})}{\leq }\max_{i\in I}h\left( \left[ \rho _{i}\left(
f_{i}\left( u_{n}\right) \right) \right] ^{\alpha },\left[ \rho _{i}\left(
f_{i}\left( u\right) \right) \right] ^{\alpha }\right)
\end{equation*}%
\begin{equation*}
\overset{\text{Claim }\ref{cl1}}{\leq }\max_{i\in I}d_{\infty }\left(
f_{i}\left( u_{n}\right) ,f_{i}\left( u\right) \right)
\end{equation*}%
for all $\alpha \in (0,1]$ and $n\in
\mathbb{N}
$. Therefore,%
\begin{equation*}
d_{\infty }\left( Z\left( u_{n}\right) ,Z\left( u\right) \right) \leq
\max_{i\in I}d_{\infty }\left( f_{i}\left( u_{n}\right) ,f_{i}\left(
u\right) \right)
\end{equation*}%
for all $n\in
\mathbb{N}
$.

\begin{claim}
\label{cl_cont} The operator $Z$ is a continuous function.
\end{claim}

Justification: Let $\left( u_{n}\right) _{n}$ be a sequence of elements from
$\mathcal{F}_{\mathcal{S}}^{\ast }$ and let $u\in \mathcal{F}_{\mathcal{S}%
}^{\ast }$ such that $\lim_{n\rightarrow \infty }d_{\infty }\left(
u_{n},u\right) =0$. We deduce $\lim_{n\rightarrow \infty }h\left( \left[
u_{n}\right] ^{0},\left[ u\right] ^{0}\right) =0$. Applying Proposition \ref%
{h_cont} and taking into account that $\left[ u\right] ^{0}$ and $\left[
u_{n}\right] ^{0}$ are compact sets for all $n\in
\mathbb{N}
$, it results $K_{0}:=\left[ u\right] ^{0}\cup \left( \cup _{n\geq 1}\left[
u_{n}\right] ^{0}\right) \in P_{cp}\left( X\right) $.

Let $\varepsilon >0$. As the function $f_{i}$ is uniformly continuous on $%
K_{0}$ for all $i\in I$ and $I$ is finite, we deduce there exists $\delta
_{\varepsilon }>0$ such that
\begin{equation}
d\left( f_{i}\left( x\right) ,f_{i}\left( y\right) \right) <\varepsilon
\label{claim5_r1}
\end{equation}%
for all $i\in I$ and $x,y\in K_{0}$ with $d\left( x,y\right) <\delta
_{\varepsilon }$.

As $\lim_{n\rightarrow \infty }d_{\infty }\left( u_{n},u\right) =0$, we
obtain there exists $n_{\varepsilon }\in
\mathbb{N}
$ such that $d_{\infty }\left( u_{n},u\right) <\delta _{\varepsilon }$ for
every $n\in
\mathbb{N}
$, $n\geq n_{\varepsilon }$. Then,
\begin{equation}
h\left( \left[ u_{n}\right] ^{\alpha },\left[ u\right] ^{\alpha }\right)
\leq d_{\infty }\left( u_{n},u\right) <\delta _{\varepsilon }
\label{claim5_r2}
\end{equation}%
for all $\alpha \in (0,1]$ and $n\in
\mathbb{N}
$, $n\geq n_{\varepsilon }$.

Applying relations (\ref{claim5_r1}) and (\ref{claim5_r2}), we deduce
\begin{equation*}
h\left( \left[ f_{i}\left( u_{n}\right) \right] ^{\alpha },\left[
f_{i}\left( u\right) \right] ^{\alpha }\right) =h\left( f_{i}\left( \left[
u_{n}\right] ^{\alpha }\right) ,f_{i}\left( \left[ u\right] ^{\alpha
}\right) \right) <\varepsilon
\end{equation*}%
for all $i\in I$, $\alpha \in (0,1]$ and $n\in
\mathbb{N}
$, $n\geq n_{\varepsilon }$. It results
\begin{equation*}
d_{\infty }\left( f_{i}\left( u_{n}\right) ,f_{i}\left( u\right) \right)
\leq \varepsilon
\end{equation*}%
for all $i\in I$ and $n\in
\mathbb{N}
$, $n\geq n_{\varepsilon }$.

Taking into account Claim \ref{cl_ineq} and the above relation, it results
\begin{equation*}
d_{\infty }\left( Z\left( u_{n}\right) ,Z\left( u\right) \right) \leq
\max_{i\in I}d_{\infty }\left( f_{i}\left( u_{n}\right) ,f_{i}\left(
u\right) \right) \leq \varepsilon
\end{equation*}%
for all $n\in
\mathbb{N}
$, $n\geq n_{\varepsilon }$. In conclusion, $Z$ is continuous.

Using the fact that $\left( Z^{n}\left( u\right) \right) _{n\in
\mathbb{N}
}$ is convergent to $\mathbf{u}_{u}$ and applying Claim \ref{cl_cont}, we
deduce $Z\left( \mathbf{u}_{u}\right) =\mathbf{u}_{u}$. Therefore, $Z$ is a
weakly Picard operator.
\end{proof}

\section{Example}

\bigskip

Let us consider the metric space $\left(
\mathbb{R}
^{2},\left\Vert .\right\Vert _{2}\right) $, where $\left\Vert .\right\Vert
_{2}$ is the Euclidean norm, and the functions $f_{1}$ and $f_{2}$, with $%
f_{1},$ $f_{2}:%
\mathbb{R}
^{2}\rightarrow
\mathbb{R}
^{2}$, given by $f_{1}\left( x,y\right) =\left( x,\frac{1}{2}y\right) $ and $%
f_{2}\left( x,y\right) =\left( x,\frac{1}{2}y+\frac{1}{2}\right) $ for all $%
\left( x,y\right) \in
\mathbb{R}
^{2}$. Let us consider $I=\left\{ 1,2\right\} $. One can easily prove that $%
\mathcal{S}=\left( \left(
\mathbb{R}
^{2},\left\Vert .\right\Vert _{2}\right) ,\left( f_{i}\right) _{i\in
I}\right) $ is an orbital iterated function system and for every $K\in
P_{cp}\left(
\mathbb{R}
^{2}\right) ,$ the attractor is $A_{K}=\left[ 0,1\right] \times \pi
_{2}\left( K\right) $ (see \textbf{example A} from \cite{Savu}). Let us
consider the admissible system of functions $\left( \rho _{i}\right) _{i\in
I}$, given by $\rho _{1}\left( t\right) =t$ and $\rho _{2}\left( t\right) =%
\frac{3t}{4}$ for every $t\in \left[ 0,1\right] $. We consider the function $%
u\in \mathcal{F}_{\mathcal{S}_{\Lambda }}^{\ast }$, defined by
\begin{equation*}
u\left( x,y\right) =\left\{
\begin{array}{c}
1,\text{ if }\left( x,y\right) \in \lbrack 0,1]\times \left\{ 0\right\} \\
0\text{, otherwise}%
\end{array}%
,\right.
\end{equation*}
for all $\left( x,y\right) \in
\mathbb{R}
^{2}$.

Let $x\in \left[ 0,1\right] $. We consider the system restricted to $%
X_{x}:=\left\{ x\right\} \times
\mathbb{R}
$. In this case, the attractor is $A_{x}=\left\{ x\right\} \times \left[ 0,1%
\right] $. We want to find the function $\mathbf{u}_{x}$.

Let us consider $p:\Lambda ^{\ast }\left( I\right) \cup \Lambda \left(
I\right) \rightarrow \left[ 0,1\right] $, given by $p\left( \alpha \right)
=\sum_{n=1}^{|\alpha |}\frac{\alpha _{i}-1}{2^{n}},$for all $\alpha \in
\Lambda ^{\ast }\left( I\right) \cup \Lambda \left( I\right) $. We also
define the function $\eta :$ $\Lambda ^{\ast }\left( I\right) \cup \Lambda
\left( I\right) \rightarrow
\mathbb{N}
\cup \left\{ +\infty \right\} $, given by $\eta \left( \alpha \right)
=\left\vert \left\{ \alpha _{i}\text{ }|\text{ }\alpha _{i}=2,i\in \left\{
1,|\alpha |\right\} \right\} \right\vert $ for all $\alpha \in \Lambda
^{\ast }\left( I\right) \cup \Lambda \left( I\right) $, where for a set $A$
we denoted by $|A|$ the number of elements of $A$.

It is easy to see that $Z\left( u\right) \left( x,y\right) =\left\{
\begin{array}{c}
1\text{, if }x\in \lbrack 0,1]\text{ and }y=0 \\
\frac{3}{4}\text{, if }x\in \lbrack 0,1]\text{ and }y=\frac{1}{2} \\
0\text{, otherwise}%
\end{array}%
,\right. $ for all $\left( x,y\right) \in
\mathbb{R}
^{2}$. Using mathematical induction, one can prove that $Z^{n}\left(
u\right) \left( x,y\right) =$ $\max_{p\left( \alpha \right) =y,|\alpha |\leq
n}\left( \frac{3}{4}\right) ^{\eta \left( \alpha \right) }$, if $\left(
x,y\right) \in \left[ 0,1\right] ^{2}$ and there exists $\alpha \in \Lambda
^{\ast }\left( I\right) $ such that $|\alpha |\leq n$, $y=p\left( \alpha
\right) $ and $Z^{n}\left( u\right) \left( x,y\right) =0$, otherwise, for
all $\left( x,y\right) \in
\mathbb{R}
^{2}$. By passing to limit, we have $\mathbf{u}_{x}=\lim_{n\rightarrow
\infty }Z^{n}\left( u\right) $.

\bigskip

The following figures illustrate the example presented above:

\definecolor{C1}{cmyk}{0, 0.0, 0.0, 0.75}
\definecolor{C2}{cmyk}{0, 0.0, 0.0, 0.5}

\begin{figure}[H]
\begin{tikzpicture}[scale=1]
    \node at (-0.15, -0.2) {0};
    \draw [->] (0,-1) -- (0,1.5) node (yaxis) [above] {$y$};
    \draw [->] (-2,0) -- (5,0) node (xaxis) [right] {$x$};
  \draw [line width=0.4mm, black ] (0,0) -- (4,0);
\end{tikzpicture}
\caption{Step 1}
\end{figure}

\begin{figure}[H]
\begin{tikzpicture}
\node at (-0.15, -0.2) {0};
    \draw [->] (0,-1) -- (0,3) node (yaxis) [above] {$y$};
    \draw [->] (-2,0) -- (5,0) node (xaxis) [right] {$x$};

\draw [line width=0.4mm, black ] (0,0) -- (4,0);

\draw [line width=0.4mm, C1 ] (0,2) -- (4,2);

\end{tikzpicture}
\caption{Step 2}
\end{figure}

\begin{figure}[H]
\begin{tikzpicture}

\node at (-0.15, -0.2) {0};
    \draw [->] (0,-1) -- (0,4) node (yaxis) [above] {$y$};
    \draw [->] (-2,0) -- (5,0) node (xaxis) [right] {$x$};

\draw [line width=0.4mm, black ] (0,0) -- (4,0);

\draw [line width=0.4mm, C1 ] (0,2) -- (4,2);
\draw [line width=0.4mm, C1 ] (0,1) -- (4,1);

\draw [line width=0.4mm, C2 ] (0,3) -- (4,3);
\end{tikzpicture}
\caption{Step 3}
\end{figure}

\begin{figure}[H]
\begin{tikzpicture}

\node at (-0.15, -0.2) {0};
    \draw [->] (0,-1) -- (0,4.1) node (yaxis) [above] {$y$};
    \draw [->] (-2,0) -- (5,0) node (xaxis) [right] {$x$};

\draw [line width=0.4mm, black ] (0,0) -- (4,0);

\draw [line width=0.4mm, C1 ] (0,2) -- (4,2);
\draw [line width=0.4mm, C1 ] (0,1) -- (4,1);
\draw [line width=0.4mm, C1 ] (0,0.5) -- (4,0.5);
\draw [line width=0.4mm, C1 ] (0,0.25) -- (4,0.25);

\draw [line width=0.4mm, C2 ] (0,3) -- (4,3);
\draw [line width=0.4mm, C2 ] (0,2.5) -- (4,2.5);
\draw [line width=0.4mm, C2 ] (0,2.25) -- (4,2.25);
\draw [line width=0.4mm, C2 ] (0,1.5) -- (4,1.5);
\draw [line width=0.4mm, C2 ] (0,1.25) -- (4,1.25);
\draw [line width=0.4mm, C2 ] (0,0.75) -- (4,0.75);
\draw [line width=0.4mm, C2 ] (0,0.625) -- (4,0.625);

\end{tikzpicture}
\caption{Step 4}
\end{figure}

\bigskip

\textbf{Alexandru MIHAIL}

Faculty of Mathematics and Computer Science

Bucharest University, Romania

Str. Academiei, 010014, Bucharest, Romania

E-mail:\ mihail\_alex@yahoo.com

\bigskip

\textbf{Irina SAVU}

Faculty of Applied Sciences

Politehnica University of Bucharest, Romania

Splaiul Independen\c{t}ei 313, Bucharest, Romania

E-mail:\ maria\_irina.savu@upb.ro

\end{singlespace}
\end{document}